\newcommand{\showcomments}{yes}
\renewcommand{\showcomments}{no}
\newsavebox{\commentbox}
\title[Local rigidity in $\PGL(3, \bC)$]{Local rigidity for $\PGL(3 , \bC)$-representations of $3$-manifold groups}
\author[Bergeron, Falbel, Guilloux, Koseleff, Rouillier]{Nicolas Bergeron, Elisha Falbel, Antonin Guilloux,\\ Pierre-Vincent
Koseleff and Fabrice Rouillier} 
\thanks{N.B. is a member of the Institut Universitaire de France.}
\address{Institut de Math\'ematiques de Jussieu \\
Unit\'e Mixte de Recherche 7586 du CNRS \\
Universit\'e Pierre et Marie Curie \\
4, place Jussieu 75252 Paris Cedex 05, France \\}
\email{bergeron@math.jussieu.fr \\ falbel@math.jussieu.fr \\
aguillou@math.jussieu.fr \\ koseleff@math.jussieu.fr \\
fabrice.rouillier@inria.fr}
\urladdr{http://people.math.jussieu.fr/~bergeron \\
http://people.math.jussieu.fr/~falbel \\
http://people.math.jussieu.fr/~aguillou\\
http://people.math.jussieu.fr/~koseleff\\
}
\DeclareFontFamily{OT1}{rsfs}{}
\DeclareFontShape{OT1}{rsfs}{n}{it}{<-> rsfs10}{}
\DeclareMathAlphabet{\mathscr}{OT1}{rsfs}{n}{it}
\newcommand{\Z}{\mathbb{Z}}
\newcommand{\m}{\mathfrak{m}}
\DeclareFontFamily{OT1}{rsfs}{}
\DeclareFontShape{OT1}{rsfs}{n}{it}{<-> rsfs10}{}
\DeclareMathAlphabet{\mathscr}{OT1}{rsfs}{n}{it}
\newcommand{\Q}{\mathbb{Q}}
\newcommand{\R}{\mathbb{R}}
\newtheorem{theorem}[subsection]{Theorem}
\newtheorem{lemma}[subsection]{Lemma}
\newtheorem{proposition}[subsection]{Proposition}
\newtheorem*{theorem*}{Theorem}
\newtheorem*{appl*}{Application}
\theoremstyle{definition}
\theoremstyle{remark}
\newtheorem*{remark}{Remark}
\newcommand\bZ{\mathbb{Z}}
\newcommand\bC{\mathbb{C}}
\newcommand\bR{\mathbb{R}}
\newcommand\SL{\textrm{SL}}
\newcommand\PGL{\textrm{PGL}}
\newcommand\PU{\textrm{PU}}
\numberwithin{equation}{subsection}
\newcommand{\cal}{\mathcal}
\newcommand{\PSL}{\mathrm{PSL}}
\def\adots{\mathinner{\mkern2mu\raise1pt\hbox{.}
\mkern3mu\raise4pt\hbox{.}\mkern1mu\raise7pt\hbox{.}}}
\begin{document}

\begin{abstract}  
Let $M$ be a non-compact hyperbolic $3$-manifold that has a
triangulation by positively oriented ideal tetraedra. We explain how
 to produce local coordinates for the
variety defined by the gluing equations for $\PGL (3 ,
\bC)$-representations. In particular we prove local rigidity of the
``geometric'' representation in $\PGL (3 , \bC)$, recovering a recent
result of Menal-Ferrer and Porti.  
More generally we give a criterion for local rigidty of $\PGL (3 ,
\bC)$-representations and provide detailed analysis of the figure
eight knot sister manifold exhibiting the different possibilities
that can occur. 
\end{abstract}

\maketitle

\tableofcontents

\section{Introduction}

Let $M$ be a compact orientable $3$-manifold with boundary a union of
$\ell$ tori. Assume that the interior of $M$ carries a hyperbolic
metric of finite volume and let $\rho_{} : \pi_1 (M) \rightarrow
\mathrm{PGL}(3, \bC)$ be the corresponding holonomy composed with the
$3$-dimensional irreducible representation of $\mathrm{PGL} (2 , \bC)$ (this representation is usually called geometric
or adjoint representation).  

Building on \cite{BFG} we give a combinatorial proof of the following
theorem first proved by Menal-Ferrer and Porti
\cite{MenalFerrerPorti}.  

\begin{theorem} \label{T1}
The class $[\rho_{}]$ of $\rho$ in the algebraic quotient of
$\mathrm{Hom} (\pi_1 (M) , \mathrm{PGL}(3 , \bC))$ by the 
action of $\mathrm{PGL} (3 , \bC)$ by conjugation is a smooth point with local dimension $2\ell$.
\end{theorem}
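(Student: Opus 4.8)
The plan is to compare the character variety $X(M)=\Hom(\pi_{1}(M),\PGL(3,\bC))/\!/\PGL(3,\bC)$, near the point $[\rho]$, with the ``gluing variety'' $\mathcal{Y}$ attached by \cite{BFG} to the given positively oriented ideal triangulation $T$ of $M$ --- the affine variety parametrizing compatible configurations of flags on the tetrahedra of $T$, i.e. solutions of the edge equations and the internal compatibility equations --- through its holonomy morphism $\mathrm{hol}\colon\mathcal{Y}\to X(M)$. The geometric representation lifts to a distinguished point $y_{0}\in\mathcal{Y}$: one decorates each cusp with the unique $\rho$-invariant flag, which is unique because the peripheral holonomy of $\rho$ is a regular unipotent element of $\PGL(3,\bC)$, and then $\mathrm{hol}(y_{0})=[\rho]$. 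I will prove three things: (i) $\mathcal{Y}$ is smooth at $y_{0}$ of dimension $2\ell$; (ii) $d\,\mathrm{hol}_{y_{0}}$ is injective; (iii) $\dim T_{[\rho]}X(M)\le 2\ell$. Granting these, (i)--(ii) give $\dim_{[\rho]}X(M)\ge\dim_{y_{0}}\mathcal{Y}=2\ell$, while (iii) gives $\dim_{[\rho]}X(M)\le\dim T_{[\rho]}X(M)\le 2\ell$; hence $[\rho]$ is a smooth point of local dimension $2\ell$.

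For (i) --- the core of the argument --- I would write the edge and internal equations of $\mathcal{Y}$ explicitly in the $\bC^{\ast}$-valued coordinates of \cite{BFG}, presenting $\mathcal{Y}$ as the zero locus of a map $F\colon(\bC^{\ast})^{V}\to\bC^{E}$, where $V$ counts the flag parameters carried by the $t$ tetrahedra of $T$. A Thurston-type count --- using the Euler-characteristic identity for an ideal triangulation of a manifold with $\ell$ torus cusps (number of edges $=$ number of tetrahedra), together with the relations expressing that the product of the edge equations around each cusp is governed by the peripheral holonomy --- predicts that $F$ has rank $V-2\ell$ near $y_{0}$, so that every component of $\mathcal{Y}$ through $y_{0}$ has dimension $\ge 2\ell$. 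To upgrade this to smoothness of the exact dimension $2\ell$, I would exhibit explicit local coordinates at $y_{0}$, e.g. parameters recording the $\PGL(3,\bC)$-holonomy of the peripheral curves, and check directly that $dF_{y_{0}}$ has the predicted rank $V-2\ell$: this is precisely the ``local coordinates for the variety defined by the gluing equations'' announced in the abstract.

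For (ii) I would run the argument of \cite{BFG}: an infinitesimal deformation of the flag configuration whose holonomy is a coboundary (an inner infinitesimal deformation) must itself come from an inner deformation of the configuration, hence is trivial in $T_{y_{0}}\mathcal{Y}$. For (iii), since $\rho$ is irreducible one has $T_{[\rho]}X(M)\hookrightarrow H^{1}(\pi_{1}(M);\mathfrak{sl}_{3}(\bC)_{\Ad\rho})$, so it suffices to know $\dim H^{1}(\pi_{1}(M);\mathfrak{sl}_{3}(\bC)_{\Ad\rho})=2\ell$; this is the computation of Menal-Ferrer and Porti \cite{MenalFerrerPorti} --- ``half lives, half dies'' (Poincar\'e--Lefschetz duality) gives $\dim H^{1}(M)=\tfrac12\dim H^{1}(\partial M)$, and on each boundary torus $\Ad\rho$ is a regular unipotent, whose centralizer in $\mathfrak{sl}_{3}(\bC)$ is $2$-dimensional, so that $\dim H^{1}(\partial M)=4\ell$. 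Alternatively, with a little more work one shows that $\mathrm{hol}$ is a local homeomorphism near $y_{0}$ --- representations $\PGL(3,\bC)$-close to $\rho$ still restrict to non-degenerate flag configurations on the tetrahedra and still carry a distinguished invariant flag at each cusp, so they lie in the image of $\mathrm{hol}$ --- which makes (iii) unnecessary.

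The main obstacle is step (i): controlling the rank of the large, combinatorially defined Jacobian $dF_{y_{0}}$, i.e. producing honest local coordinates on $\mathcal{Y}$ at the geometric point. The cohomological bound (iii) and the injectivity (ii) are adaptations of standard deformation-theoretic arguments, but disentangling, inside $dF_{y_{0}}$, the internal flag parameters of each tetrahedron from the global edge relations --- and pinning down which subset of coordinates forms a local chart --- is where the combinatorics of $T$ must be used in an essential way.
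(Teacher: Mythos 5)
Your outline follows the paper's general strategy (pass to the decorated/gluing variety of \cite{BFG} and compare it with the character variety via the holonomy map), but at the decisive step (i) it stops exactly where the paper's work begins: you observe that a Thurston-type count only \emph{predicts} the rank of the Jacobian of the gluing equations and say you would ``check directly'' that the rank is as predicted, and you yourself flag this as the main obstacle. That check is Theorem \ref{theo:variety}, and it is not a combinatorial rank computation: the paper first identifies $\mathrm{Ker}\, d_zg$ as $(\bC\otimes(\mathrm{Im}\,p\cap\mathrm{Ker}\,F^*))\cap\mathcal{A}(z)$ (Proposition \ref{kernel}), where $\mathcal{A}(z)$ is the kernel coming from the affine relations $z_{ik}=1/(1-z_{ij})$, and then exploits the symplectic structure of \cite{BFG}: by Proposition \ref{L4} the quotient $(J^*\cap\mathrm{Ker}\,F^*)/\mathrm{Im}(p\circ F)$ is symplectically isomorphic to $\oplus_s H^1(T_s,\bC^2)$ with the Weil--Petersson form, and the key positivity Lemma \ref{lem:A-Omega} computes, tetrahedron by tetrahedron, $\Omega^*(\xi,\bar\xi)=-\sum|\xi^\mu_{ij}|^2\bigl(1/\overline{z_{il}}-1/z_{il}\bigr)$ for $\xi\in\mathcal{A}(z)$, which is nonzero when all $\mathrm{Im}\,z_{ij}>0$. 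This forces $\mathcal{A}(z)\cap(\bC\otimes\mathrm{Im}(p\circ F))=\{0\}$ and makes the image $\mathcal{L}(z)$ a Lagrangian transverse to $\sum_s[b_s]\otimes\bC^2$, yielding at once smoothness, dimension $2\ell$, and the fact that the boundary eigenvalue map $(A_s,A_s^*)$ is a local biholomorphism (a Choi-type argument). This analytic input cannot be dispensed with: the paper's own figure-eight-sister computations exhibit points of $\mathcal{R}(M,\mathcal{T})$ outside $\mathcal{R}(M,\mathcal{T}^+)$ where the rank drops and positive-dimensional unipotent families appear, so no manipulation of the combinatorics of $T$ alone can establish the rank you need; the hypothesis of positively oriented tetrahedra must enter through an inequality such as the one above. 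As written, your proposal therefore has a genuine gap at its core step.

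Two secondary points. First, your route (iii) invokes Menal-Ferrer--Porti's computation of $H^1(\pi_1(M),\mathfrak{sl}_3(\bC))$, which is essentially the statement to be proved; relative to the purpose of giving an independent combinatorial proof this is circular, so you should commit to your alternative (local homeomorphism of $\mathrm{hol}$ near $y_0$, i.e.\ openness of the map to the character variety as in \ref{representation}, together with uniqueness of the invariant flag at each cusp), which is also what makes step (ii) manageable. Second, Theorem \ref{T1} does not assume that $M$ carries an ideal triangulation, let alone a positively oriented one; the paper reduces to this case by passing to a finite regular cover $M'$ which admits one (\cite{LST}) and descending via Lemma \ref{lem:fincov}, using that a loxodromic element of $\PGL(3,\bC)$ has only finitely many $n$-th roots, so that $\rho$ cannot be deformed without deforming $\rho'=\rho_{|\pi_1(M')}$. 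Your proposal assumes a positively oriented ideal triangulation of $M$ itself and omits this reduction.
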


Our main theorem \ref{theo:variety}
 is in fact more general.  We do not solely consider the geometric representation
and in fact our proof applies to an 
explicit open subset (called  $\mathcal{R}(M , \mathcal{T}^+)$, see subsection \ref{plus}) of the (decorated) representation variety into $\PGL(3,\bC)$. It also provides
explicit coordinates and a description of the possible deformations. We analyse
in the last section the figure-eight knot sister manifold: we describe all the
(decorated) representations whose restriction to the boundary torus are
unipotent. It turns out that there exist rigid points (i.e. isolated points in
the (decorated) unipotent representation variety) together with non-rigid
components. 

There is a natural holonomy map (see section \ref{symplectic}) from the (decorated) representation variety of $M$ to
the representation variety of its boundary. It is known that its image is a
Lagrangian subvariety and the map is a local isomorphism on a Zariski-open set. Our remark in subsection 
\ref{lagrangian} proves in a combinatorial way these facts. When $M$ is a knot
complement and one considers the group $\mathrm{PGL}(2,\bC)$ instead of
$\mathrm{PGL}(3,\bC)$, this image is the algebraic variety defined by the
$A$-polynomial of the knot. In this paper, we explore more precisely the
map $\mathrm{hol}$ and exhibit a fiber which is not discrete. 

This research was in part financed by the ANR project {\sl Structures G\'eom\'etriques Triangul\'ees}.


\section{Ideal triangulation}

\subsection{} An {\it ordered simplex} is a simplex with a fixed
vertex ordering. Recall that an orientation of a set of vertices is a
numbering of the elements of this set up to even permutation.  
The face of an ordered simplex inherits  an orientation.  
We call {\it abstract triangulation} a pair $\mathcal{T}=( (T_{\mu}
)_{\mu = 1 , \ldots , \nu} , \Phi)$ where $(T_{\mu} )_{\mu = 1 ,
  \ldots , \nu}$ is a finite family of abstract ordered simplicial
tetrahedra and $\Phi$ is a matching of the faces of the $T_{\mu}$'s
reversing the orientation.  
For any simplicial tetrahedron $T$, we define $\mathrm{Trunc} (T)$ as
the tetrahedron truncated at each vertex. The space obtained from $\mathrm{Trunc} (T_{\mu})$  after matching the faces
will be denoted by $K$.  

We call {\it triangulation}
--- or rather {\it ideal triangulation} --- of a compact $3$-manifold
$M$ with boundary an abstract triangulation $\mathcal{T}$ and an oriented homeomorphism  
$$ K= \bigsqcup_{\mu =1}^{\nu} \mathrm{Trunc} (T_{\mu}) / \Phi \rightarrow M .$$

In the following we will always assume that the boundary of $M$ is a
disjoint union of a finite collection of $2$-dimensional tori. 
Recall that, by a simple Euler characteristic count, the number of edges of $K$ is equal to the number $\nu$ of tetrahedra. The
most important family of examples being the compact $3$-manifolds
whose interior carries a complete hyperbolic structure of finite
volume. The existence of an ideal triangulation for $M$ still appears
to be an open question.\footnote{Note however that starting from the
  Epstein-Penner decomposition of $M$ into ideal polyhedra, Petronio
  and Porti \cite{PetronioPorti} produce a degenerate triangulation of
  $M$.} Luo, Schleimer and Tillmann \cite{LST} nevertheless prove
that, passing to a finite regular cover, we may assume that $M$ admits
an ideal triangulation.  
In the following paragraphs we assume that $M$ itself admits an ideal
triangulation $\mathcal{T}$ and postpone to the proof of Theorem
\ref{T1} the task of reducing to this case (see lemma \ref{lem:fincov}).

\subsection{Parabolic decorations} 
We recall from \cite{BFG} the
notion of a {\it parabolic decoration} of the pair $(M,\mathcal{T})$:
to each tetrahedron $T_{\mu}$ of $\mathcal{T}$ we associate non-zero
complex coordinates $z_{\alpha} (T_{\mu})$ ($\alpha \in I$) where  
$$I = \{ \mbox{vertices of the (red) arrows in the triangulation given by Figure \ref{FG}} \}.$$
\begin{figure}[ht]      
\begin{center}
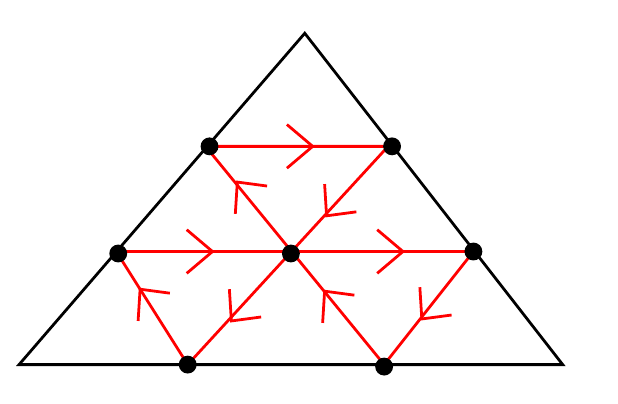
\caption{Combinatorics of $W$} \label{FG}
\end{center}
\end{figure}
Let $J_{T_{\mu}}^2 = \mathbb{Z}^I$ be the $16$-dimensional abstract
free $\mathbb{Z}$-module and denote the canonical basis $\{e_{\alpha}
\}_{\alpha \in I}$ of $J_{T_{\mu}}^2$.  
It contains {\it oriented
  edges} $e_{ij}$ (edges are oriented from $j$ to $i$) and {\it faces}
$e_{ijk}$. Using these notations the $16$-tuple of complex parameters
$(z_{\alpha} (T_{\mu}))_{\alpha \in I}$ is better viewed as an element    
$$z(T_{\mu})  \in \mathrm{Hom}(J^2_{T_{\mu}} , \bC^{\times})
\cong \bC^{\times} \otimes_{\Z} (J_{T_{\mu}}^2 )^*.$$ 
We refer to \cite{BFG} for details. 
Such an element uniquely determines a tetrahedron of flags if and only
if the following relations are satisfied: 
\begin{equation} \label{Rel1}
z_{ijk}= - z_{il}z_{jl}z_{kl},
\end{equation}
and
\begin{equation} \label{Rel3}
z_{ik} =  \frac{1}{1-z_{ij}}.
\end{equation}

Remark that the second relation implies the following one:
\begin{equation} \label{Rel2}
z_{ij} z_{ik} z_{il} = -1, 
\end{equation}

\begin{figure}[ht]      
\begin{center}
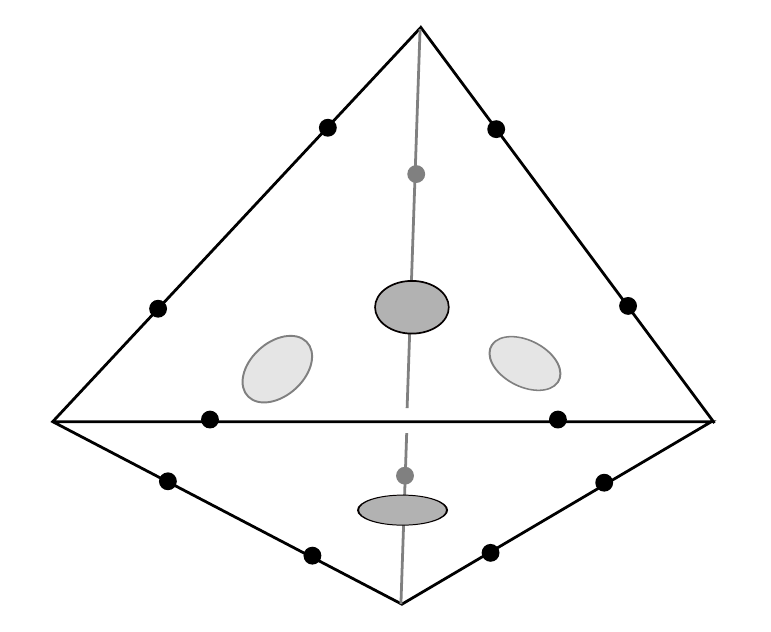
\caption{The $z$-coordinates for a tetrahedron} \label{tetra2}
\end{center}
\end{figure}

\subsection{} Let $J^2$ denote the direct sum of the
$J_{T_{\mu}}^2$'s and consider an element $z \in \bC^{\times}
\otimes_{\Z} (J^2)^*$  as a set of parameters of the triangulation
$\mathcal{T}$. 
As usual, these coordinates are subject to consistency relations after
gluing by $\Phi$: given two adjacent tetrahedra $T_{\mu}$, $T_{\mu'}$
of $T$ with a common face  
$(ijk)$ then 
\begin{equation} \label{face}
z_{ijk} (T_{\mu}) z_{ikj} (T_{\mu'}) = 1.
\end{equation}
And given a sequence $T_1, \ldots , T_{\mu}$ of tetrahedra sharing a
common edge $ij$ and such that $ij$ is an inner edge of the sub
complex composed by $T_1\cup \cdots \cup T_{\mu}$ 
then 
\begin{equation} \label{edge}
z_{ij} (T_1) \cdots z_{ij} (T_{\mu}) = z_{ji} (T_1) \cdots z_{ji} (T_{\mu}) = 1.
\end{equation}

\subsection{} \label{rem:deco} Consider a fundamental domain of the
triangulation of the universal cover $\tilde M$ lifted from the one of $M$. A
decoration of the complex is then equivalent to
an assignment of a flag to each of its vertices; together with
an additional transversality condition on the flags to ensure that the
$z_\alpha$'s do not vanish.

\section{The representation variety} 

Given $M$ and a triangulation $\mathcal{T}$ we consider the space of parabolic decorations and denote it by  $\mathcal{R}(M,\mathcal{T})$ (we call it {\it representation variety} associated to parabolic decorations of a triangulation).  It is observed in the next subsection that it can be identified to an open subset of $\mathrm{Hom}(\pi_1(M),\PGL(3,\bC))/\PGL(3,\bC)$.

More explicitly we  define the 
 $\mathcal{R}(M,\mathcal{T})$ as:
$$\mathcal{R} (M,\mathcal{T}) = g^{-1} (1 , \ldots , 1)$$
where $g=(h,a,f) : \bC^{\times} \otimes (J^2)^* \to (\bC^{\times})^{8
  \nu} \times (\bC^{\times})^{4 \nu} \times  (\bC^{\times})^{4 \nu} \cong {(\bC^\times)}^{16\nu}$ is the product of the three
maps $h$, $a$, $f$, defined below.

\subsection{} First $h = (h_1 , \ldots , h_{\nu})$ is the product of
the maps $h_{\mu} : \bC^{\times} \otimes_{\Z} (J_{T_{\mu}}^2)^*
\rightarrow {(\bC^\times)}^8$ ($\mu = 1 , \ldots ,\nu$) associated to the   
$T_{\mu}$'s and which are defined by
\begin{multline*}
h_{\mu} (z) = \Big( - \frac{z_{ijk}}{z_{il} z_{jl} z_{kl}} , -
\frac{z_{ikl}}{z_{ij} z_{kj} z_{lj}} , - \frac{z_{ilj}}{z_{ik} z_{lk}
  z_{jk}}, - \frac{z_{kjl}}{z_{ki} z_{ji} z_{lj}} , \\ -z_{ij}z_{ik}
z_{il} , -z_{ji}z_{jk} z_{jl} , -z_{ki}z_{kj} z_{kl} , -z_{li}z_{lj}
z_{lk}    \Big) 
\end{multline*}
here $z = z(T_{\mu}) \in \bC^{\times} \otimes_{\Z} (J_{T_{\mu}}^2)^*$,
cf. \eqref{Rel1} and \eqref{Rel2}.  

\subsection{} Next we define the map $a$, cf. \eqref{Rel3}. Let 
$a_{\mu} : \bC^{\times} \otimes_{\Z}
(J_{T_{\mu}}^2)^* \rightarrow \bC^4$ ($\mu = 1 , \ldots \nu$)
associated to $T_{\mu}$
be the map defined by 
$$a_{\mu} (z) = (z_{ik} (1- z_{ij}) , z_{jl} (1-z_{ji}) , z_{ki} (1-z_{kl}) , z_{lj} (1-z_{lk})).$$
We define then 
$a= (a_1 , \ldots , a_{\nu})$.

\subsection{} Finally we let $C_1^{\rm or}$ be the free $\Z$-module
generated by the oriented $1$-simplices of $K$ and $C_2$ the
free $\Z$-module generated by the $2$-faces of $K$.  As observed before, in the case $K$
has only tori as ideal boundaries, the number of edges in $K$ is $\nu$ and the number of faces is $2\nu$. Therefore
the  $\Z$-module $C^{\rm or}_1+ C_2$ has rank $4\nu$ and therefore  $\mathrm{Hom} (C_1^{\rm or} + C_2 , \bC^{\times})\cong (\bC^{\times})^{4 \nu}$.

 As in
 \cite{BFG}, we define a map  
$$
F : C^{\rm or}_1+ C_2 \rightarrow  J^2
$$
by, for $\bar e_{ij}$ an oriented edge of $K$,  
$$
F(\bar e_{ij})= e_{ij}^{1} + \ldots + e_{ij}^{\mu} 
$$
where $T_{1}, \ldots , T_{\mu}$ is a sequence of tetrahedra sharing the edge $\bar e_{ij}$
such that $\bar e_{ij}$ is an inner edge of the subcomplex $T_{1} \cup
\cdots \cup T_{\mu}$ and each $e_{ij}^{\mu}$ gets identified with the
{\it oriented} edge $\bar e_{ij}$ in $\mathcal{T}$. And for a $2$-face
$\bar e_{ijk}$,  
$$
F(\bar e_{ijk})=e_{ijk}^{\mu} + e_{ikj}^{\mu'},
$$
where $\mu$ and $\mu'$ index the two 
$3$-simplices having the common face $\bar e_{ijk}$.
We then define the map 
$$
f: \mathrm{Hom} (J^2 , \bC^{\times}) \rightarrow \mathrm{Hom} (C_1^{\rm or} + C_2 , \bC^{\times})\cong (\bC^{\times})^{4 \nu} 
$$
by $f(z) = z \circ F$, compare \eqref{face} and \eqref{edge}.  A decoration $z \in \bC^{\times}
\otimes_{\Z} (J^2)^*$  satisfies the edge and face equations (\ref{edge} and \ref{face}) if and only if $f(z)=1$ (compare with the map $F^*$ defined in the next section 
so we can write equivalently $z\in \bC^\times\otimes_\bZ \mathrm{Ker}(F^*)$).

\subsection{} 
\label{representation}
From an element in $\mathcal R(M,\mathcal T)$, one may
reconstruct a representation (up to conjugacy) by computing the holonomy of the
complex of flags (see \cite[section 5]{BFG}). Restating Remark
\ref{rem:deco}, a decoration is equivalent to a map, equivariant under
$\pi_1(M)$, from the space of cusps of $\tilde M$ to the space of flags with a
transversality condition. Note that each flag is then invariant by the holonomy
of the cusp.

Moreover, the map from $\mathcal
R(M,\mathcal T)$ to $\mathrm{Hom}(\pi_1(M),\PGL(3,\bC))/\PGL(3,\bC)$ is open:
given  a representation $\rho$, its decoration equip each cusp $p$ of $M$
with a flag $F_p$ invariant by the holonomy of the isotropy $\Gamma_p$ of $p$.
Now,  deforming the representation $\rho$ to $\rho'$, for each cusp $p$, one 
can deform $F_p$ into a flag $F'_p$ invariant under $\rho'(\Gamma_p)$. The
transversality condition being open, this gives a decoration for any decoration
$\rho'$ near $\rho$.  

Generalizations of this formalism to representations of 3-dimensional fundamental groups to $\PGL(n,\bC)$ for $n\geq 3$ can be seen in
\cite{GGZ,DGG}.

\section{The symplectic isomorphism}\label{symplectic}

In this section we recall results of \cite{BFG} which will be used in the proof of the main theorem.
As in \cite{BFG}  each
$J_{T_{\mu}}^2$ is equipped with  
a bilinear skew-symmetric form given by 
$$\Omega^2 ( e_{\alpha} , e_{\beta} ) = \varepsilon_{\alpha \beta}.$$
Here given $\alpha$ and $\beta$ in $I$ we set (recall figure \ref{FG}):
$$\varepsilon_{\alpha \beta} = \# \{ \mbox{oriented (red) arrows from
  $\alpha$ to $\beta$}\} -  \# \{ \mbox{oriented (red) arrows from
  $\beta$ to $\alpha$}\}.$$ 

We let $(J^2 , \Omega^2)$ denote the orthogonal sum of the spaces
$(J^2_{T_{\mu}} , \Omega^2)$. We denote by $e_{\alpha}^{\mu}$ the
$e_{\alpha}$-element in $J_{T_{\mu}}^2$. 
Let 
$$p : J^2 \rightarrow (J^2)^*$$
be the homomorphism $v \mapsto \Omega^2 (v , \cdot )$. On the basis
$(e_{\alpha})$ and its dual $(e_{\alpha}^*)$, we can write 
$$p (e_{\alpha}) = \sum_{\beta} \varepsilon_{\alpha \beta} e_{\beta}^*.$$
Let $J$ be the quotient of $J^2$ by the kernel of $\Omega^2$. The
latter is the subspace generated on each tetrahedron by elements of
the form  
$$\sum_{\alpha \in I} b_{\alpha} e_{\alpha}$$
for all $\{ b_{\alpha} \} \in \mathbb{Z}^I$ such that $\sum_{\alpha
  \in I} b_{\alpha} \varepsilon_{\alpha \beta} =0$ for every $\beta
\in I$. 
Equivalently it is the subspace generated by $e_{ij}+e_{ik}+e_{il}$ and $e_{ijk}- (e_{il}+e_{jl}+e_{kl})$.
 
We let $J^*  \subset (J^2)^*$ be the dual subspace which consists of
the linear maps which vanish on the kernel of $\Omega^2$. Note that  
we have $J^* = \mathrm{Im} ( p)$ and that it is $8$-dimensional. The
form $\Omega^2$ induces a  non-degenerate  skew-symmetric (we will call it symplectic) form
$\Omega$ on $J$. 
This yields a canonical identification between $J$ and $J^*$; we
denote by $\Omega^*$ the corresponding symplectic form on $J^*$. 

Consider the sequence introduced in \cite{BFG}:
$$
{C_1^{\rm or} + C_2 \stackrel{F}{\rightarrow} J^2
\stackrel{p}{\rightarrow} (J^2)^* \stackrel{F^*}{\rightarrow} C_1^{\rm
  or} + C_2}. 
 $$ 
The skew-symmetric form $\Omega^*$ on $J^*$ is non-degenerate but its restriction to 
$\mathrm{Im} ( p)\cap \mathrm{Ker} (F^*)$ has a kernel. In \cite{BFG}
we relate this form with ``Goldman-Weil-Petersson'' forms on the
peripheral tori: there is a form $\textrm{wp}_s$ on each
$H^1(T_s,\bZ^2)$, $s=1, \ldots , \ell$, defined as the coupling of the
cup product on $H^1$ with the scalar product $\langle , \rangle$ on
$\bZ^2$ defined by: 
\footnote{This product should be interpreted as the Killing form on
  the space of roots of $\mathfrak{sl}(3,\bC)$ through a suitable
  choice of basis.} 
$$\langle
\begin{pmatrix}n\\m\end{pmatrix},
\begin{pmatrix}n'\\m'\end{pmatrix}\rangle
=\frac{1}{3}(2nn'+2mm'+nm'+n'm),$$ 
see \cite[section 7.2]{BFG}. 

For our purpose we rephrase the content of \cite[Corollary 7.11]{BFG} in the following:

\begin{proposition} \label{L4} 
We have $\mathrm{Ker} (\Omega^*_{| \mathrm{Im} ( p) \cap \mathrm{Ker}
  (F^*)} ) = \mathrm{Im} (p \circ F)$. The skew-symmetric form
$\Omega^*$ therefore induces a {\rm symplectic} form on the quotient 
$$(J^* \cap \mathrm{Ker} (F^*)) / \mathrm{Im} (p\circ F).$$
Moreover: there is a symplectic isomorphism --- defined over $\Q$ --- between
this quotient and the space $\oplus_{s=1}^{\ell} H^1 (T_s,\bZ^2)$ equipped with
the direct sum $\oplus_s \mathrm{wp}_s$, still denoted $\mathrm{wp}$.
\end{proposition}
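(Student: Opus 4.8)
The plan is to reduce the statement to a purely homological computation involving the complex
$C_1^{\mathrm{or}}+C_2 \xrightarrow{F} J^2 \xrightarrow{p} (J^2)^* \xrightarrow{F^*} C_1^{\mathrm{or}}+C_2$
and then identify the relevant subquotient with the boundary cohomology. First I would verify that this is a chain complex, i.e.\ that $p\circ F$ maps into $\mathrm{Ker}(F^*)$ and that $F^*\circ p\circ F=0$; this is essentially the statement that the gluing relations around edges are compatible with the face relations, and should follow from the local combinatorics encoded in Figure~\ref{FG} together with the definition of $\varepsilon_{\alpha\beta}$. Granting this, $\mathrm{Im}(p\circ F)$ is automatically an isotropic subspace of $(J^*,\Omega^*)$: indeed for $u,v\in C_1^{\mathrm{or}}+C_2$ we have $\Omega^*(p(Fu),p(Fv))=\Omega^2(Fu,Fv)$ under the canonical identification, and one checks this vanishes because $F$ sends the generators of $C_1^{\mathrm{or}}$ and $C_2$ to mutually ``orthogonal'' combinations of edge- and face-vectors inside each $J^2_{T_\mu}$ (edges pair trivially with edges across distinct tetrahedra, faces appear in exactly two tetrahedra with opposite orientation, etc.). So the induced form on $(J^*\cap\mathrm{Ker}(F^*))/\mathrm{Im}(p\circ F)$ is at least well defined and skew-symmetric.

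The heart of the matter is the equality $\mathrm{Ker}(\Omega^*|_{\mathrm{Im}(p)\cap\mathrm{Ker}(F^*)})=\mathrm{Im}(p\circ F)$, which gives both that the induced form is nondegenerate (symplectic) and pins down the quotient. For this I would invoke \cite[Corollary 7.11]{BFG} directly, as the statement says we are merely rephrasing it: the point is that \cite{BFG} already establishes a Poincaré–Lefschetz-type duality identifying the homology of the above four-term complex at the middle spots with $H^*(\partial M)$, and the radical of $\Omega^*$ restricted to $\mathrm{Im}(p)\cap\mathrm{Ker}(F^*)$ is exactly the image of the incoming map $p\circ F$. The remaining work is then to make the identification with $\bigoplus_{s=1}^\ell H^1(T_s,\bZ^2)$ explicit: each peripheral torus $T_s$ contributes an $H^1(T_s)\cong\bZ^2$ worth of classes (the two homology directions, with the $\bZ^2$-coefficient recording the two ``weights'' coming from the $3$-dimensional irreducible $\PGL(2)$-representation, i.e.\ the highest-weight and the other weight of $\mathfrak{sl}(3)$), the total dimension $2\ell$ matches, and the map is defined over $\Q$ because $F$, $p$ and the duality isomorphism are all defined by integer matrices.

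Finally I would check that this isomorphism is symplectic by comparing the two bilinear forms on generators. On the $\partial M$ side the form is $\mathrm{wp}=\bigoplus_s\mathrm{wp}_s$, the cup product on $H^1(T_s)$ coupled with the explicit symmetric pairing $\langle\cdot,\cdot\rangle$ on $\bZ^2$ with Gram matrix $\tfrac13\begin{pmatrix}2&1\\1&2\end{pmatrix}$, which is the Killing form of $\mathfrak{sl}(3,\bC)$ in the weight basis; on the $J^*$ side the form is $\Omega^*$. The identity $\Omega^*(p\,\cdot\,,p\,\cdot\,)=\Omega^2(\cdot,\cdot)$ reduces this to a tetrahedron-local computation: one writes a $1$-cocycle on $T_s$ as a combination of the edge-vectors $e_{ij}$ lying on the link of the cusp $s$, transports it through $p\circ F$, and computes $\Omega^2$ of two such combinations; the resulting coefficients should reproduce exactly the cup-product-times-Killing-form pairing, the factor $\tfrac13$ arising from the relation $e_{ij}+e_{ik}+e_{il}=0$ in $J$ (which forces the three link-directions in a tetrahedron corner to sum to zero, hence the ``$1/3$'' normalization). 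The main obstacle I anticipate is precisely this last bookkeeping step: tracking orientations of the link triangulation of each $T_s$ against the arrow-combinatorics of Figure~\ref{FG} carefully enough to get the symmetric $\bZ^2$-pairing with the correct cross-terms, rather than merely matching it up to an overall scalar or a sign. Since \cite{BFG} has done this, I would lean on \cite[section 7.2]{BFG} for the normalization and restrict the new verification to confirming that no orientation conventions have drifted.
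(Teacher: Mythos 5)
Your proposal is sound in substance and, like the paper itself, ultimately rests on \cite[Theorem 7.9 and Corollary 7.11]{BFG} for the one genuinely hard point: that the radical of $\Omega^*$ restricted to $\mathrm{Im}(p)\cap\mathrm{Ker}(F^*)$ is exactly $\mathrm{Im}(p\circ F)$ and that the induced form is the Weil--Petersson form on the boundary. Where you take a different route is in how the isomorphism with $\oplus_{s}H^1(T_s,\bZ^2)$ is exhibited: you propose a Poincar\'e--Lefschetz-type identification of the middle homology of the four-term complex with boundary cohomology, followed by a generator-by-generator comparison of $\Omega^2$ with the cup product coupled to the $\tfrac13$-normalized pairing, whereas the paper pins the map down concretely as the linearization \eqref{dhol} of the holonomy map $\mathrm{hol}$ (over $\Z$ it is $h^*$ from \cite[section 7.4]{BFG} composed with the projection to $\oplus_s H^1(T_s,\bZ^2)$), restricted to $J^*\cap\mathrm{Ker}(F^*)$ and passed to the quotient by $\mathrm{Im}(p\circ F)$. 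That choice is not cosmetic: subsection \ref{lagrangian} and the proof of Theorem \ref{theo:variety} use precisely the fact that the isomorphism is the differential of $\mathrm{hol}$, so that the image $\mathcal L(z)$ of $\mathcal A(z)$ can be played off against the coordinates $(A_s,A_s^*)$; with your duality description you would still need to identify your isomorphism with $d\,\mathrm{hol}$ before it is usable downstream. Two caveats on your sketch: the quotient and $\oplus_s H^1(T_s,\bZ^2)$ have rank $4\ell$, not $2\ell$ (each torus contributes $H^1(T_s)\otimes\bZ^2$, of rank $4$); and the ``local'' verifications you present as routine --- $F^*\circ p\circ F=0$, isotropy of $\mathrm{Im}(p\circ F)$, and the normalization giving the cross-terms of $\langle\,,\rangle$ --- are exactly the combinatorial content of \cite{BFG} and should be cited rather than re-derived casually, since the arrow bookkeeping of Figure \ref{FG} is where such arguments usually go wrong. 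With those adjustments your argument is correct, if somewhat more roundabout than the paper's direct appeal to the linearized holonomy.
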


\subsection{} Let us briefly explain how to understand Proposition \ref{L4} as a corollary in \cite{BFG}. First recall from 
\cite[section 7.3]{BFG} that given an element $z \in \mathcal{R}(M ,
\mathcal{T})$ we may compute the holonomy of a loop 
$c \in H_1 (T_s,\bZ)$ and get an upper triangular matrix; let
$(\frac{1}{C^*} , 1 , C)$ be its diagonal part. The application which
maps $c \otimes \begin{pmatrix}n\\m\end{pmatrix}$ to $C^m (C^*)^n$ yields
 the {\it holonomy map}
$$\label{holonomy}
\mathrm{hol} : \mathcal{R}(M , \mathcal{T}) \to \oplus_{s=1}^{\ell}
\mathrm{Hom} (H_1 (T_s, \bZ^2) , \bC^{\times}).
$$ 
The symplectic map of the proposition is the linearization of this holonomy map. Here is how it is done:
our variety $\mathcal{R}(M , \mathcal{T})$ is a subvariety of $\bC^\times \otimes (J^2)^*$. This last space may be viewed as the exponential of the $\bC$-vector space $\bC\otimes (J^2)^*$. Lemma 7.5 of \cite{BFG}\footnote{there, the holonomy map is denoted $R$.} expresses the square of $\mathrm{hol}$ as the exponential of a linear map:
\begin{equation} \label{dhol}
\bC \otimes (J^2)^* \to \oplus_{s=1}^{\ell}H^1(T_s,\bC^2)\simeq \oplus_{s=1}^{\ell}\mathrm{Hom} (H_1 (T_s, \bZ^2) , \bC).
\end{equation}
Moreover this map is defined over $\Q$ and at the level of the $\Z$-modules. At this level, it is indeed obtained as the composition of the map $h^*$, dual to the map $h$ defined in \cite[section 7.4]{BFG}, with the projection 
to $\oplus_s H^1 (T_s , \bZ^2) \cong  \bZ^{4\ell}$ (using a symplectic basis of  $H_1 (T_s , \bZ)$).
 The symplectic isomorphism of Proposition \ref{L4} is given by this map \cite[Theorem 7.9 and Corollary 7.11]{BFG}, after restriction to $J^* \cap \mathrm{Ker} (F^*)$ and quotienting by $\mathrm{Im}(p\circ F)$ (see \cite[section 7.6]{BFG}).

\section{Infinitesimal deformations}

\subsection{} Let $ z= (z (T_{\mu}))_{\mu = 1 , \ldots , \nu} \in
\mathcal{R}(M,\mathcal{T})$. The exponential map identifies $T_z (
\bC^{\times} \otimes_{\Z} (J^2)^*)$ with  
$\bC \otimes (J^2)^* = \mathrm{Hom} (J^2 , \bC)$. Under this
identification the differential $d_z g$ defines a linear map which we
write as a direct sum $d_z h \oplus d_z a \oplus d_z f$. 

In the following three lemmas we identify the kernel of each of these
three linear maps in order to prove Proposition \ref{kernel}.

\begin{lemma} \label{L1}
As a subspace of $\bC \otimes (J^2)^*$ the kernel of $d_z h$ is equal to $\bC \otimes J^*$.
\end{lemma}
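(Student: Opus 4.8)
The plan is to unwind the definition of the map $h_\mu$ and observe that, up to taking logarithms, it is a \emph{linear} map in disguise, so computing its kernel is just a linear algebra computation. Concretely, $h_\mu$ sends $z = z(T_\mu) \in \bC^\times \otimes_\Z (J^2_{T_\mu})^*$ to an $8$-tuple of Laurent monomials in the coordinates $z_\alpha$; differentiating at a point $z$ of the representation variety and using the exponential identification of $T_z(\bC^\times \otimes (J^2_{T_\mu})^*)$ with $\bC \otimes (J^2_{T_\mu})^* = \mathrm{Hom}(J^2_{T_\mu},\bC)$, the differential $d_z h_\mu$ becomes (independently of $z$) the $\bC$-linear map $\mathrm{Hom}(J^2_{T_\mu},\bC) \to \bC^8$ whose eight coordinates read off the signed exponents appearing in the eight monomials. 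Explicitly, writing an infinitesimal deformation as $\dot z \in \mathrm{Hom}(J^2_{T_\mu},\bC)$, the first coordinate is $\dot z(e_{ijk}) - \dot z(e_{il}) - \dot z(e_{jl}) - \dot z(e_{kl}) = \dot z\big(e_{ijk} - (e_{il}+e_{jl}+e_{kl})\big)$, and the fifth is $\dot z(e_{ij}+e_{ik}+e_{il})$, with the remaining six obtained from these by the symmetries of the tetrahedron.

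The key point is then exactly the description of $\mathrm{Ker}(\Omega^2)$ recalled just before Proposition \ref{L4}: on each tetrahedron this kernel is generated by the vectors $e_{ij}+e_{ik}+e_{il}$ (four of them, one per vertex) and $e_{ijk}-(e_{il}+e_{jl}+e_{kl})$ (four of them, one per face). These are, vertex-by-vertex and face-by-face, precisely the eight vectors against which $d_z h_\mu$ pairs its input $\dot z$. Hence $\dot z \in \mathrm{Ker}(d_z h_\mu)$ if and only if $\dot z$ annihilates all eight of these generators, i.e. if and only if $\dot z$ vanishes on $\mathrm{Ker}(\Omega^2)$ restricted to $T_\mu$ — which is exactly the condition that $\dot z$ lies in $J^*_{T_\mu} \subset (J^2_{T_\mu})^*$, since $J^*$ was defined as the annihilator of $\mathrm{Ker}(\Omega^2)$.

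First I would set up the exponential identification carefully and check the claim that $d_z h_\mu$ is independent of $z$ and equals the "exponent-matrix" linear map (this is the routine computation $\partial(\log \text{monomial})/\partial(\log z_\alpha) = $ exponent, which I would not grind through). Then I would match the eight output coordinates of this linear map with the eight generators of $\mathrm{Ker}(\Omega^2)|_{T_\mu}$, invoking relations \eqref{Rel1} and \eqref{Rel2} to see that $h_\mu$ was built precisely from these combinations. Finally I would assemble over all $\mu$: since $h = \prod_\mu h_\mu$ and $J^2 = \bigoplus_\mu J^2_{T_\mu}$ with $J^* = \bigoplus_\mu J^*_{T_\mu}$, the kernel of $d_z h = \bigoplus_\mu d_z h_\mu$ is $\bigoplus_\mu (\bC \otimes J^*_{T_\mu}) = \bC \otimes J^*$, giving the claim. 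The only mild obstacle is bookkeeping: making sure the signs and the index relabellings in the four "face" coordinates and four "vertex" coordinates of $h_\mu$ line up exactly with the stated generators of $\mathrm{Ker}(\Omega^2)$, and that no extra relation sneaks in; but this is bounded, and nothing conceptual is at stake beyond the duality $J^* = \mathrm{Ann}(\mathrm{Ker}\,\Omega^2)$ already recorded in the text.
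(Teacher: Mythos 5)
Your proof is correct and follows exactly the paper's argument: the paper's own proof simply observes that, from the definitions, $\xi$ lies in $\mathrm{Ker}(d_z h)$ if and only if it vanishes on $\mathrm{Ker}(\Omega^2)$, generated tetrahedron-by-tetrahedron by $e_{ij}+e_{ik}+e_{il}$ and $e_{ijk}-(e_{il}+e_{jl}+e_{kl})$, which is precisely the annihilator condition defining $J^*$. You have merely spelled out the log-differential computation that the paper leaves implicit, so there is nothing to change.
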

\begin{proof} It follows from the definitions that $\xi \in \bC
  \otimes (J^2)^*$ belongs to the kernel of $d_z h$ if and only if it
  vanishes on the subspace $\mathrm{Ker} (\Omega^2 )$ generated by
  $e_{ij}^{\nu}+e_{ik}^{\nu}+e_{il}^{\nu}$ and $e_{ijk}^{\nu}-
  (e_{il}^{\nu}+e_{jl}^{\nu}+e_{kl}^{\nu})$. 
This concludes the proof.
\end{proof}

\begin{lemma} \label{L2}
As a subspace of $\bC \otimes (J^2)^*$ the kernel of $d_z a$ is equal
to the subspace $\mathcal{A} (z)$ defined as:
$$\left\{ \xi \in \mathrm{Hom} (J^2 , \bC) \; : \; \begin{array}{l}
    \xi (e^{\mu}_{ij}) + z_{il} (T_{\mu}) \xi (e^{\mu}_{ik})=0, \ \xi
    (e^{\mu}_{ji}) + z_{jk} (T_{\mu}) \xi (e^{\mu}_{jl})=0 \\ \xi
    (e^{\mu}_{ki}) + z_{kl} (T_{\mu}) \xi (e^{\mu}_{kj})=0, \ \xi
    (e^{\mu}_{lj}) + z_{lk} (T_{\mu}) \xi (e^{\mu}_{li})=0
  \end{array}, \forall  \mu \right\}.$$ 
\end{lemma}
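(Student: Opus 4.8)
The plan is to compute $d_z a$ directly in the exponential chart. Since $a=(a_1,\ldots,a_\nu)$ and each $a_\mu$ depends only on the coordinates $z(T_\mu)$ of the single tetrahedron $T_\mu$, the differential splits as a direct sum, $d_z a=\bigoplus_\mu d_{z(T_\mu)}a_\mu$, with respect to $\mathrm{Hom}(J^2,\bC)=\bigoplus_\mu\mathrm{Hom}(J^2_{T_\mu},\bC)$; hence $\xi$ lies in $\mathrm{Ker}(d_z a)$ if and only if the restriction of $\xi$ to each $J^2_{T_\mu}$ lies in $\mathrm{Ker}(d_{z(T_\mu)}a_\mu)$. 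So it suffices to identify $\mathrm{Ker}(d_{z(T_\mu)}a_\mu)$ for a fixed $\mu$, and since the four components of $a_\mu$ are obtained from the first one $z\mapsto z_{ik}(1-z_{ij})$ by the three vertex relabellings $(ij)(kl)$, $(ik)(jl)$, $(il)(jk)$ of $\{i,j,k,l\}$, it is enough to differentiate the first component and transport the answer by these permutations.

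Under the identification of $T_z(\bC^\times\otimes_\Z(J^2)^*)$ with $\mathrm{Hom}(J^2,\bC)$ recalled before the lemma, a tangent vector $\xi$ corresponds to the infinitesimal variation $z_\alpha\mapsto z_\alpha\bigl(1+t\,\xi(e_\alpha)\bigr)$. Substituting this into $z_{ik}(1-z_{ij})$ and differentiating at $t=0$ gives
\[
z_{ik}\,\xi(e_{ik})\,(1-z_{ij})\;-\;z_{ik}z_{ij}\,\xi(e_{ij}).
\]
Now $z\in\mathcal R(M,\mathcal T)$ means in particular $a(z)=1$ and $h(z)=1$; the first gives relation \eqref{Rel3} on $T_\mu$, i.e. $z_{ik}(1-z_{ij})=1$, and the second gives relation \eqref{Rel2}, i.e. $z_{ij}z_{ik}z_{il}=-1$ (which in fact already follows from \eqref{Rel3}). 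Plugging these in, the displayed quantity becomes $\xi(e_{ik})+z_{il}^{-1}\,\xi(e_{ij})$, and since $z_{il}\neq 0$ its vanishing is equivalent to $\xi(e^\mu_{ij})+z_{il}(T_\mu)\,\xi(e^\mu_{ik})=0$. Running the same computation on the other three components of $a_\mu$ (obtained from the first by the relabellings above) produces the three remaining equations defining $\mathcal A(z)$ on $T_\mu$; reassembling over $\mu$ yields $\mathrm{Ker}(d_z a)=\mathcal A(z)$.

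The derivative in the middle step is a one-line calculation, so the only real work — and the place where one must be careful — is the combinatorial bookkeeping: one must use Figures \ref{FG} and \ref{tetra2} to keep track, at each vertex of $T_\mu$, of which of the three incident edges plays the role of $e_{ij}$, $e_{ik}$ and $e_{il}$ in \eqref{Rel2} and \eqref{Rel3}, so that the substitution above delivers exactly the four equations in the statement rather than the same equations with the edge labels permuted. One should also note at the outset that it is membership in $\mathcal R(M,\mathcal T)$ — and not merely the per-tetrahedron decoration relations \eqref{Rel1}, \eqref{Rel3} — that is being invoked, through both $a(z)=1$ and $h(z)=1$.
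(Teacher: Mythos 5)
Your proof is correct and takes essentially the same route as the paper: the paper likewise reduces to one tetrahedron and one coordinate of $a_{\mu}$, differentiates $z\mapsto z_{ik}(1-z_{ij})$ (via the logarithmic derivative instead of your product rule, a cosmetic difference), uses $a_{\mu}(z)=1$ and $h_{\mu}(z)=1$ at the point to rewrite the vanishing as $\xi(e_{ij}^{\mu})+z_{il}(T_{\mu})\,\xi(e_{ik}^{\mu})=0$, and disposes of the other three coordinates ``in the same way,'' exactly as you do by relabelling the vertices. The bookkeeping caveat you raise is also present, and no more resolved, in the paper's own one-coordinate argument, so there is no gap in your proposal relative to the paper's proof.
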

\begin{proof} Here again we only have to check this on each tetrahedra
  $T_{\mu}$ of $\mathcal{T}$.  
All four coordinates of $a_{\mu}$ can be dealt with in the same way,
we only consider the first coordinate:  
$$z \mapsto z_{ik} (1-z_{ij}).$$
Taking the differential of the logarithm we get:
$$\frac{dz_{ik}}{z_{ik}} - \frac{dz_{ij}}{1-z_{ij}} =0.$$
Equivalently,
$$\frac{dz_{ij}}{z_{ij}} = \left( \frac{1-z_{ij}}{z_{ij}} \right) \frac{dz_{ik}}{z_{ik}}.$$
Since $z  \in \mathcal{R}(M,\mathcal{T})$, we have $h_{\nu} (z) = a_{\mu} (z) =1$. In particular 
$$(1-z_{ij}) = \frac{1}{z_{ik}} \quad \mbox{and} \quad z_{ij} z_{ik} = - \frac{1}{z_{il}}.$$
We conclude that 
$$\frac{dz_{ij}}{z_{ij}} + z_{il} \frac{dz_{ik}}{z_{ik}}=0.$$
Under the identification of $T_z ( \bC^{\times} \otimes_{\Z} (J^2)^*)$ with 
$\bC \otimes (J^2)^* = \mathrm{Hom} (J^2 , \bC)$ this proves the lemma.
\end{proof}

We denote by $F^* : (J^2)^* \rightarrow C_1^{\rm or} + C_2$ the dual
map to $F$ (here we identify $C_1^{\rm or} + C_2$ with its dual by
using the canonical basis). It is the ``projection map'': 
$$(e_{\alpha}^{\mu})^* \mapsto \bar e_{\alpha}$$
when $(e_{\alpha}^{\mu})^* \in (J^2)^*$ 
. By definition of $f$ we have: 

\begin{lemma} \label{L3}
As a subspace of $\bC \otimes (J^2)^*$ the kernel of $d_z f$ is equal
to $\bC \otimes \mathrm{Ker} (F^*)$. 
\end{lemma}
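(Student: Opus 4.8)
The plan is to exploit the fact that $f$ is a homomorphism of algebraic tori --- a ``monomial map'' --- so that its differential at every point $z$ is one and the same linear map, namely the $\bC$-linear extension of $F^{*}$. Concretely, since $z \in \bC^{\times}\otimes_{\Z}(J^2)^{*} = \mathrm{Hom}(J^2,\bC^{\times})$ is a homomorphism from the additive group $J^2$ to $\bC^{\times}$ and $f(z) = z\circ F$, evaluating $f(z)$ on the canonical basis of $C_1^{\rm or}+C_2$ gives, for an oriented edge, $f(z)(\bar e_{ij}) = z\bigl(e_{ij}^{1}+\cdots+e_{ij}^{\mu}\bigr) = \prod_{t=1}^{\mu} z_{ij}(T_{t})$, and for a face $f(z)(\bar e_{ijk}) = z_{ijk}(T_{\mu})\,z_{ikj}(T_{\mu'})$ --- exactly the left-hand sides of the edge and face equations \eqref{edge} and \eqref{face}. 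In particular each coordinate of $f$ is a Laurent monomial in the coordinates $z_{\alpha}^{\mu}$, whose exponent vector is the corresponding column (resp. row) of the integer matrix of $F$.

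Next I would pass to logarithmic coordinates. Writing $z = \exp(\xi)$ with $\xi \in \mathrm{Hom}(J^2,\bC)$ --- meaning $z(v) = \exp(\xi(v))$ for $v \in J^2$ --- one gets $f(\exp\xi)(w) = z(F(w)) = \exp\bigl(\xi(F(w))\bigr)$ for $w \in C_1^{\rm or}+C_2$, i.e. $f\circ\exp = \exp\circ\bigl(\xi\mapsto \xi\circ F\bigr)$. Hence, under the exponential identifications of $T_{z}(\bC^{\times}\otimes_{\Z}(J^2)^{*})$ with $\mathrm{Hom}(J^2,\bC)$ and of the tangent space of the target with $\mathrm{Hom}(C_1^{\rm or}+C_2,\bC)$, the differential $d_{z}f$ is \emph{independent of $z$} and equals the linear map $\xi\mapsto \xi\circ F$. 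By definition of the dual map (with $C_1^{\rm or}+C_2$ identified with its dual via the canonical basis), $\xi\circ F$ is precisely $F^{*}(\xi)$, so $d_{z}f = F^{*}\otimes_{\Z}\mathrm{id}_{\bC}$.

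Finally, the kernel of $F^{*}\otimes_{\Z}\mathrm{id}_{\bC}$ is $\mathrm{Ker}(F^{*})\otimes_{\Z}\bC = \bC\otimes\mathrm{Ker}(F^{*})$: since $F^{*}$ is a $\Z$-linear map between finitely generated free $\Z$-modules and $\bC$ is torsion-free, hence flat, over $\Z$, tensoring the left-exact sequence $0 \to \mathrm{Ker}(F^{*}) \to (J^2)^{*} \xrightarrow{F^{*}} C_1^{\rm or}+C_2$ with $\bC$ preserves exactness. This yields the asserted identification of $\mathrm{Ker}(d_{z}f)$ with $\bC\otimes\mathrm{Ker}(F^{*})$. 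There is no real obstacle here; the one point deserving care is the second step --- checking that $d_{z}f$ does not depend on $z$ --- which simply reflects that $f$ is a homomorphism of tori, after which the statement is pure linear algebra.
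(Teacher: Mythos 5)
Your argument is correct and matches the paper's: the paper gives no separate proof, stating the lemma as immediate ``by definition of $f$'', precisely because $f(z)=z\circ F$ is a monomial (torus) homomorphism whose differential in exponential coordinates is the constant map $\xi\mapsto\xi\circ F=F^*(\xi)$, with kernel $\bC\otimes\mathrm{Ker}(F^*)$. You have simply spelled out these routine details (constancy of $d_zf$ and flatness of $\bC$ over $\Z$), which is exactly the intended justification.
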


Lemma \ref{L1}, \ref{L2} and \ref{L3} clearly imply the next proposition.
\begin{proposition}\label{kernel}
\begin{equation}
\mathrm{Ker} \ d_z g = ( \bC \otimes (\mathrm{Im} ( p ) \cap
\mathrm{Ker} (F^* ) ) \cap \mathcal{A}(z) . 
\end{equation}
\end{proposition}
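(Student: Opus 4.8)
The plan is to assemble Proposition \ref{kernel} directly from the three preceding lemmas, since the statement is exactly the intersection of the three kernels. First I would recall that $d_z g = d_z h \oplus d_z a \oplus d_z f$, so that $\mathrm{Ker}\, d_z g = \mathrm{Ker}\, d_z h \cap \mathrm{Ker}\, d_z a \cap \mathrm{Ker}\, d_z f$ inside $\bC \otimes (J^2)^*$. Then I would substitute the three descriptions: Lemma \ref{L1} gives $\mathrm{Ker}\, d_z h = \bC \otimes J^*$, Lemma \ref{L3} gives $\mathrm{Ker}\, d_z f = \bC \otimes \mathrm{Ker}(F^*)$, and Lemma \ref{L2} gives $\mathrm{Ker}\, d_z a = \mathcal{A}(z)$.

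Next I would intersect these. The first two are both of the form $\bC \otimes(\text{a rational subspace of }(J^2)^*)$, so their intersection is $\bC \otimes (J^* \cap \mathrm{Ker}(F^*))$. Here I would invoke the identification $J^* = \mathrm{Im}(p)$ established in Section \ref{symplectic} (the subspace of linear maps vanishing on $\mathrm{Ker}\,\Omega^2$ coincides with the image of $p: J^2 \to (J^2)^*$), so that $\bC \otimes J^* = \bC \otimes \mathrm{Im}(p)$ and the combined kernel of $d_z h \oplus d_z f$ is $\bC \otimes (\mathrm{Im}(p) \cap \mathrm{Ker}(F^*))$. Intersecting further with $\mathcal{A}(z)$ yields exactly the claimed expression
$$\mathrm{Ker}\, d_z g = \bigl(\bC \otimes (\mathrm{Im}(p) \cap \mathrm{Ker}(F^*))\bigr) \cap \mathcal{A}(z).$$

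The only genuine point requiring a word of care is that $\mathcal{A}(z)$ is defined as a subset of $\mathrm{Hom}(J^2,\bC)$ cut out by linear equations but is not itself of the tensor-product form $\bC \otimes (\text{defined-over-}\Q\text{ subspace})$ — its defining equations involve the coordinates $z_{\alpha}(T_\mu)$, hence depend on the point $z$. So the formula is genuinely an intersection of a fixed rational subspace with a $z$-dependent one, and the statement should be read that way; there is no simplification collapsing $\mathcal{A}(z)$ into the tensor factor. I do not expect any real obstacle here: the proposition is a formal bookkeeping consequence of Lemmas \ref{L1}–\ref{L3} together with the identification $J^* = \mathrm{Im}(p)$, and the proof is a two-line assembly. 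The substantive content lies entirely in the three lemmas, which have already been proved.
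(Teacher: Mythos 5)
Your proposal is correct and matches the paper's own (very brief) argument: the paper simply notes that Lemmas \ref{L1}, \ref{L2} and \ref{L3} imply the proposition, exactly the intersection-of-kernels assembly you describe, with the identification $J^* = \mathrm{Im}(p)$ already recorded in Section \ref{symplectic}. Your additional remark that $\mathcal{A}(z)$ is $z$-dependent and not of tensor-product form is a sensible clarification but introduces nothing beyond the paper's reasoning.
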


Note that among these three spaces, two are defined over $\Z$ and do not depend
on the point $z$, but the last one, $\mathcal A(z)$, is actually depending on
$z$. We shall give examples where the dimension of the intersection vary
and describe the corresponding deformations in $\mathcal{R}(M ,
\mathcal{T})$. But first we consider an open subset 
of $\mathcal{R} (M , \mathcal{T})$ which we prove to be a manifold. 

\section{The complex manifold $\mathcal{R}(M , \mathcal{T}^+)$}

\subsection{}\label{plus} Let 
$$
\mathcal{R}(M , \mathcal{T}^+) = \left\{ z= (z (T_{\mu}))_{\mu = 1 ,
    \ldots , \nu} \in \mathcal{R}(M , \mathcal{T}) \; : \; \mathrm{Im}
  \ z_{ij} (T_{\mu}) >0 , \ \forall \mu, i,j\right\}
  $$  
be the subspace of $\mathcal{R}(M , \mathcal{T})$ whose {\it edge}
coordinates have positive imaginary parts. Note that coordinates
corresponding to the geometric representation belong to $\mathcal{R}(M
, \mathcal{T}^+)$.  

\begin{remark} Observe that in the case of an ideal triangulation of a hyperbolic
manifold with  shape parameters having all positive imaginary part and satisfying the 
edge conditions and unipotent holonomy conditions we obtain as holonomy the
geometric representation
$\rho_{geom}$.  The shape parameters in the $\PSL(2,\bC)$ case give rise 
to a parabolic decoration of the ideal triangulation in the sense of this paper which is clearly contained in $\mathcal{R}(M , \mathcal{T}^+)$.  This is explained in detail
in \cite{BFG}. 
\end{remark}

The main theorem of this section is a generalization of a theorem of
Choi \cite{Choi}; it  states that $\mathcal{R}(M , \mathcal{T}^+)$ is
a smooth complex manifold and gives local coordinates. 

Recall  we assumed that $\partial M$ is the disjoint union of $\ell$ tori. For
each boundary torus $T_s$ ($s=1 , \ldots , \ell$) of $M$ we fix a
symplectic basis $(a_s , b_s)$ of  
the first homology group $H_1 (T_s)$. Given a point $z$ in the
representation variety $\mathcal{R}(M , \mathcal{T})$ we may consider
the holonomy elements associated to $a_s$, resp. $b_s$. They preserve
a flag associated to the torus by the decoration. In a basis adapted
to this flag, those matrices are of the form (for notational
simplicity, we write them in $\PGL(3,\bC)$ rather than $\SL(3,\bC)$): 
$$\begin{pmatrix} \frac{1}{A_s^*} & *
  &*\\0&1&*\\0&0&A_s\end{pmatrix}\textrm{ and }\begin{pmatrix}
  \frac{1}{B_s^*} & * &*\\0&1&*\\0&0&B_s\end{pmatrix}.$$  
Now the diagonal entries of
the first matrix $A_s$ and $A_s^*$ for each torus define a map 
\begin{equation} \label{hol}
\mathcal{R}(M , \mathcal{T}) \rightarrow (\bC^{\times})^{2\ell}; \quad
z \mapsto (A_s , A_s^*)_{s=1 , \ldots , \ell}.  
\end{equation}

\begin{theorem}\label{theo:variety}
Assume that $\partial M$ is the disjoint union of $\ell$ tori. Then
the complex variety $\mathcal{R}(M , \mathcal{T}^+)$ is a smooth
complex manifold of dimension $2\ell$.  
Moreover: the map \eqref{hol} restricts to a local biholomorphism from
$\mathcal{R}(M , \mathcal{T}^+)$ to $(\bC^{\times})^{2\ell}$. 
\end{theorem}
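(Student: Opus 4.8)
The plan is to reduce the whole statement to one infinitesimal fact and then conclude with the inverse function theorem. Fix $z\in\mathcal R(M,\mathcal T^+)$. By Proposition~\ref{kernel} the Zariski tangent space of $\mathcal R(M,\mathcal T)$ at $z$ is
\[
K_z:=\mathrm{Ker}\,d_zg=\bigl(\bC\otimes(\mathrm{Im}(p)\cap\mathrm{Ker}(F^*))\bigr)\cap\mathcal A(z)\ \subseteq\ \bC\otimes\mathrm{Im}(p)=\bC\otimes J^*,
\]
and I would aim to prove the single assertion
\[
K_z\cap\mathrm{Ker}\,d_z\eqref{hol}=0\qquad\text{for every }z\in\mathcal R(M,\mathcal T^+).
\]
Granting this, $d_z\eqref{hol}$ embeds $K_z$ into $\bC^{2\ell}$, so $\dim_\bC K_z\le 2\ell$; on the other hand $\dim_\bC K_z=\dim_\bC T_z\mathcal R(M,\mathcal T)\ge\dim_z\mathcal R(M,\mathcal T)\ge 2\ell$, the last inequality being the elementary lower bound for the dimension of the $\PGL(3,\bC)$-character variety of a compact $3$-manifold with $\ell$ torus cusps (``half lives, half dies''), inherited by $\mathcal R(M,\mathcal T)$ through the open map of \S\ref{representation}. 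Hence $\dim_\bC T_z\mathcal R(M,\mathcal T)=2\ell=\dim_z\mathcal R(M,\mathcal T)$, so $z$ is a smooth point; as this holds at every $z\in\mathcal R(M,\mathcal T^+)$, that space is a complex manifold of dimension $2\ell$ with $T_z\mathcal R(M,\mathcal T^+)=K_z$. Then $d_z\eqref{hol}\colon K_z\to\bC^{2\ell}$ is an injection between spaces of the same dimension, hence an isomorphism, and \eqref{hol} is a local biholomorphism by the inverse function theorem.

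To prove the displayed vanishing I would play two evaluations of one Hermitian form against each other. Let $L\colon\bC\otimes(J^2)^*\to\bigoplus_{s}H^1(T_s,\bC^2)$ be the linear map~\eqref{dhol}; up to the harmless factor $2$ coming from squaring the holonomy, $d_z\eqref{hol}=\pi_a\circ L$, where $\pi_a$ is the projection of $\bigoplus_s H^1(T_s,\bC^2)$ onto the subspace $H_a$ recording only the holonomy along the curves $a_s$ --- this $H_a$ is Lagrangian for $\mathrm{wp}$ (since $a_s\cup a_s=0$) and is the tangent space at $(A_s,A_s^*)_s$ to the target of~\eqref{hol}. On $\bC\otimes J^*$ introduce the (real-valued) Hermitian form $\mathcal H(\xi):=\tfrac1{2i}\,\Omega^*(\xi,\overline{\xi})$. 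First, since $K_z\subseteq\bC\otimes(\mathrm{Im}(p)\cap\mathrm{Ker}(F^*))$, Proposition~\ref{L4} lets one evaluate $\mathcal H$ on $K_z$ through the boundary: $\mathcal H(\xi)=\tfrac1{2i}\,\mathrm{wp}(L\xi,\overline{L\xi})=\sum_s\mathrm{Im}\langle\alpha_s,\overline{\beta_s}\rangle$, where $\alpha_s,\beta_s\in\bC^2$ are the periods of $L\xi$ along $a_s$ and $b_s$. In particular, if $\xi\in K_z$ lies in $\mathrm{Ker}\,d_z\eqref{hol}$ then every $\alpha_s=0$, so $\mathcal H(\xi)=0$. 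Second, the equations cutting out $\mathcal A(z)$ (Lemma~\ref{L2}) should let one rewrite $\mathcal H|_{K_z}$, tetrahedron by tetrahedron, as a sum of squares weighted by the quantities $\mathrm{Im}\log z_{ij}(T_\mu)\in(0,\pi)$; since these weights are strictly positive when $z\in\mathcal R(M,\mathcal T^+)$, the form $\mathcal H$ is then positive-definite on $K_z$. Combining the two, $\mathcal H(\xi)=0$ forces $\xi=0$, as desired.

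The hard part will be the second evaluation --- showing that, after substituting the $\mathcal A(z)$-relations of Lemma~\ref{L2}, $\mathcal H$ becomes a genuinely positive combination of squares on $K_z$. This is the $\PGL(3,\bC)$-counterpart of the positivity/convexity argument of Neumann--Zagier for the complete structure (and of Choi's theorem \cite{Choi}), and it is the one place where the hypothesis $\mathrm{Im}\,z_{ij}(T_\mu)>0$ is used essentially; it will require unwinding the combinatorics of the red arrows (the integers $\varepsilon_{\alpha\beta}$) together with Lemma~\ref{L2} so that all the cross-terms cancel. Everything else --- Propositions~\ref{kernel} and~\ref{L4}, the a priori dimension bound, and the passage from the infinitesimal statement to the manifold structure --- is formal.
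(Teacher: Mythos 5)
Your infinitesimal mechanism is the one the paper uses: injectivity of the boundary map on $\mathrm{Ker}\,d_zg$ is obtained by pairing $\xi$ against $\bar\xi$ under $\Omega^*$ and transporting the computation to $\mathrm{wp}$ via Proposition \ref{L4}, and your ``hard part'' is exactly Lemma \ref{lem:A-Omega}. Note, though, that it is not hard and you should simply do it: since an element of $\bC\otimes J^*$ is determined by its edge coordinates and, in $J$, the only nonzero pairings are $\Omega(e_{ij},e_{ik})=1$, the relations of Lemma \ref{L2} ($\xi_{ij}=-z_{il}\xi_{ik}$) give at once
$\Omega^*(\xi,\bar\xi)=-\sum_{\mu,i}|\xi^{\mu}_{ij}|^2\bigl(1/\overline{z_{il}(T_\mu)}-1/z_{il}(T_\mu)\bigr)$,
which is nonzero for $\xi\neq0$ as soon as all $\mathrm{Im}\,z_{il}(T_\mu)>0$; there are no cross-terms to cancel, so leaving this as a promissory note is an omission but not a wrong turn. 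Your vanishing-of-$a$-periods step is the paper's final lemma ($\mathcal L(z)\cap\sum_s[b_s]\otimes\bC^2=\{0\}$) in dual clothing.

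The genuine gap is your lower bound $\dim_z\mathcal{R}(M,\mathcal{T})\ge 2\ell$. ``Half lives, half dies'' bounds $H^1(M;\mathfrak{g}_{\mathrm{Ad}\rho})$, i.e.\ a Zariski tangent space, and tangent-space bounds do not bound local dimension from below; the actual dimension lower bound for $\PGL(3,\bC)$-character varieties is a nontrivial theorem (Thurston--Culler--Shalen in rank one, Menal-Ferrer--Porti and Falbel--Guilloux in higher rank) that comes with hypotheses on the peripheral holonomy which you have not verified at an arbitrary point of $\mathcal{R}(M,\mathcal{T}^+)$ (such a point need not be geometric, irreducible, or boundary-regular); there is a further unaddressed step in transporting a bound stated for the character variety to the topological quotient $\mathrm{Hom}(\pi_1(M),\PGL(3,\bC))/\PGL(3,\bC)$ and then through the open map of \S\ref{representation}. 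Invoking that theorem would also reintroduce precisely the external machinery this paper is written to avoid. The paper gets the bound internally and unconditionally, by linear algebra: $\dim(J^*\cap\mathrm{Ker}(F^*))=4\nu+2\ell$ (using $\dim(\mathrm{Ker}(p)\cap\mathrm{Im}(F))=2\ell$ from \cite[Lemma 7.13]{BFG}), while $\mathcal{A}(z)\cap(\bC\otimes J^*)$ has dimension $4\nu$ inside the $8\nu$-dimensional space $\bC\otimes J^*$, so $\mathrm{Ker}\,d_zg$ has dimension at least $2\ell$; combined with the definiteness statement this pins the kernel at exactly $2\ell$, makes $\mathcal L(z)$ Lagrangian, and then transversality to $\sum_s[b_s]\otimes\bC^2$ delivers both smoothness and the fact that \eqref{hol} is a local biholomorphism. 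Replace your appeal to the character-variety theorem by this dimension count (or prove such a count yourself) and your argument closes.
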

\begin{proof}  The proof that $\mathcal{R}(M , \mathcal{T}^+)$ is  smooth follows imediately if 
we prove that $g$ is of constant rank at its points. We will show that the complex dimension
of  $\mathrm{Ker}(dg)$ is $2l$ and relate it to the map \ref{hol} in order to prove the second part of the theorem.

The key point of the proof of Theorem \ref{theo:variety} is the following:

\begin{lemma}\label{lem:A-Omega}Let $z \in \mathcal{R}(M , \mathcal{T}^+)$.
\begin{itemize}
 \item For every $\xi\neq 0$ in $ 
A(z)$, we have $\Omega^*(\xi,\bar \xi)\neq 0$,
 \item $(\bC \otimes (\mathrm{Im} (p \circ F))  \cap \mathcal{A} (z) = \{ 0 \}.$
\end{itemize}
\end{lemma}
\begin{proof} Here $\bar \xi$ is the complex conjugate of $ \xi$. The second point is a direct consequence of the first one. Indeed let 
$$\xi \in (\bC \otimes (\mathrm{Im} (p \circ F))  \cap \mathcal{A} (z) .$$
It follows from the first point in Proposition \ref{L4} that $\Omega^* (\xi ,
\overline{\xi}) =
0$. If the first point holds, then it forces $\xi$ to be null. 

Now $\Omega^* (\xi , \overline{\xi})$ can be computed locally on each tetrahedron $T_{\mu}$: 
Since $\xi$ belongs to the subspace $\bC \otimes J^* \subset \bC \otimes (J^2)^*$, it is
determined by the coordinates $\xi_{ij}^{\mu} = \xi (e_{ij}^{\mu})$.
Now, with respect to the
symplectic form $\Omega$, the basis vector $e_{ij}^{\mu}$ is
orthogonal to all the basis vectors except $e_{ik}^{\mu}$ and $\Omega
(e_{ij}^{\mu} , e_{ik}^{\mu}) = 1$.  By duality we therefore have   
\begin{equation*}
\begin{split}
\Omega^* (\xi , \overline{\xi}) &  = \sum_{\mu=1}^{\nu} \sum_{i=1}^4 ( \xi_{ij}^{\mu} \overline{\xi}_{ik}^{\mu} -\overline{\xi}_{ij}^{\mu} \xi_{ik}^{\mu}) \\
& =- \sum_{\mu=1}^{\nu} \sum_{i=1}^4 |\xi_{ij}^{\mu} |^2 \left( \frac{1}{\overline{z_{il} (T_{\mu})}} - \frac{1}{z_{il} (T_{\mu})}\right) .
\end{split}
\end{equation*} 
Here the last equality follows from the fact that $\xi \in \mathcal{A} (z)$. 
We conclude because for each $\mu$ and $i$ we have (up to a nonzero constant):
$$\mathrm{Im} \left( \frac{1}{\overline{z_{il} (T_{\mu})}} - \frac{1}{z_{il} (T_{\mu})}\right) >0.$$
\end{proof}

\subsection{} \label{lagrangian} Let $\mathcal L(z)$ be the image of $\mathcal A(z)$ in
$\oplus_{s=1}^{\ell} H^1 (T_s,\bC^2)$. It follows from the previous
lemma and the fact that the map is defined over $\Q$ (see Lemma
\ref{L4}) that $\mathcal L(z)$ is a totally isotropic subspace
isomorphic to $\mathcal A(z)\cap (\bC\otimes(J^*\cap \mathrm{Ker}(F^*))$ and
satisfies that for any $\chi\neq 0$ in $\mathcal L(z)$, we have
$\textrm{wp}(\chi,\bar\chi)\neq 0$. 

The space $\oplus_{s=1}^{\ell} H^1 (T_s ,\bC^2)$ decomposes as the sum
of two subspaces: $\sum_s [a_s]\otimes \bC^2$ and $\sum_s [b_s]\otimes
\bC^2$ (where $[a_s]$, resp $[b_s]$, denotes the Poincar\'e dual to
$a_s$, resp $b_s$). Both are Lagrangian subspaces and are invariant
under complex conjugation. To prove theorem \ref{theo:variety}, it
remains to prove that  
$\mathcal L (z)$ projects surjectivily onto $\sum_s [a_s]\otimes
\bC^2$. The dimension $\dim \mathcal L(z)$ may be computed. In fact,
by duality, we have: 
$$\dim (J^* \cap \mathrm{Ker}(F^*)) =  \dim (\mathrm{Im} ( p) \cap
\mathrm{Ker}(F^*)) =  \dim (J^2)^* -  \dim (\mathrm{Im} (F) + \mathrm
{Ker}( p)).$$ 
But we obviously have:
$$\dim (\mathrm{Im} (F) + \mathrm {Ker}( p)) = \dim \mathrm{Ker} ( p)
+ \dim \mathrm{Im} (F) - \dim (\mathrm {Ker}(p)\cap \mathrm{Im}(F)).$$ 
On the other hand we have $\dim J^2 = 16\nu$, $\dim \mathrm{Ker} ( p)
= 8\nu$ and\footnote{Note that the map $F$ is injective.} $\dim
\mathrm{Im} (F) = \dim C_1^{\rm or} + \dim C_2 = 4 \nu$. It finally
follows from the proof of  
\cite[Lemma 7.13]{BFG} that $\dim (\mathrm {Ker}(p)\cap \mathrm{Im}(F)) = 2\ell$. We conclude that 
$$\dim(J^*\cap \mathrm{Ker}(F^*))=4\nu+2 \ell .$$ 
Now $\dim \mathcal A(z)=4\nu$. The intersection $\mathcal A(z)\cap
J^*\cap \mathrm{Ker}(F^*)$ is therefore of dimension at least $2\ell$
and $\mathcal L(z)$ is a totally isotropic subspace of dimension at
least $2\ell$ in a symplectic space of dimension $4\ell$: it is a
Lagrangian subspace. Theorem \ref{theo:variety} now immediately
follows from the following lemma. 
\end{proof}

\begin{remark}
 The preceeding considerations give a combinatorial proof that the image of
$\mathcal R(M,\mathcal T)$ is a Lagragian subvariety of the space of
representations of the fundamental group of the boundary of $M$.
\end{remark}

\begin{lemma}
We have:
$$\mathcal L(z)\cap \sum_s [b_s]\otimes \bC^2=\{0\}.$$
 \end{lemma}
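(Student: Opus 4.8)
The plan is to show that if an element $\xi \in \mathcal{A}(z) \cap (\bC \otimes (J^* \cap \mathrm{Ker}(F^*)))$ has image $\chi = \mathrm{hol}_*(\xi) \in \mathcal{L}(z)$ lying entirely in $\sum_s [b_s] \otimes \bC^2$, then $\chi = 0$, and moreover $\xi$ itself is $0$ so that the projection $\mathcal{L}(z) \to \sum_s [a_s] \otimes \bC^2$ is injective — hence, by the dimension count already established ($\dim \mathcal{L}(z) \geq 2\ell = \dim \sum_s [a_s] \otimes \bC^2$), surjective. The key input is the linearized holonomy: recall from Proposition \ref{L4} and subsection \ref{lagrangian} that the symplectic form $\Omega^*$ corresponds under the $\Q$-defined isomorphism to $\mathrm{wp} = \oplus_s \mathrm{wp}_s$, where $\mathrm{wp}_s$ couples the cup product on $H^1(T_s)$ with the positive-definite Killing-type form $\langle\,,\,\rangle$ on $\bZ^2$. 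Since $[a_s] \cup [a_s] = 0$ and $[b_s] \cup [b_s] = 0$ (self-cup-products of $1$-classes on a surface vanish), each of $\sum_s [a_s]\otimes\bC^2$ and $\sum_s [b_s]\otimes\bC^2$ is totally isotropic for $\mathrm{wp}$.

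**First I would** use Lemma \ref{lem:A-Omega}: for every nonzero $\xi \in \mathcal{A}(z)$ we have $\Omega^*(\xi, \bar\xi) \neq 0$, and consequently — when $\xi$ also lies in $J^* \cap \mathrm{Ker}(F^*)$ so that it descends to $\mathcal{L}(z)$ — for every nonzero $\chi \in \mathcal{L}(z)$ we have $\mathrm{wp}(\chi, \bar\chi) \neq 0$ (this is exactly the statement recorded at the start of subsection \ref{lagrangian}). **Now** suppose $\chi \in \mathcal{L}(z)$ lies in $\sum_s [b_s]\otimes\bC^2$. Then $\bar\chi$ also lies in $\sum_s [b_s]\otimes\bC^2$, since that subspace is invariant under complex conjugation (noted in subsection \ref{lagrangian}). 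But $\sum_s [b_s]\otimes\bC^2$ is isotropic for $\mathrm{wp}$, so $\mathrm{wp}(\chi, \bar\chi) = 0$. By the contrapositive of the second bullet of Lemma \ref{lem:A-Omega} this forces $\chi = 0$. This proves $\mathcal{L}(z) \cap \sum_s [b_s]\otimes\bC^2 = \{0\}$.

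**The main obstacle** — or rather the point requiring care — is that Lemma \ref{lem:A-Omega} is phrased for $\xi \in \mathcal{A}(z)$ with $\Omega^*$, whereas here we need the corresponding statement for $\chi \in \mathcal{L}(z)$ with $\mathrm{wp}$. This passage is legitimate because the symplectic isomorphism of Proposition \ref{L4} intertwines $\Omega^*$ on $(J^* \cap \mathrm{Ker}(F^*))/\mathrm{Im}(p\circ F)$ with $\mathrm{wp}$, and because — again by Lemma \ref{lem:A-Omega}, second bullet — $\mathcal{A}(z)$ meets $\mathrm{Im}(p\circ F)$ trivially, so $\xi \mapsto \chi$ is injective on $\mathcal{A}(z) \cap (\bC \otimes(J^*\cap\mathrm{Ker}(F^*)))$ and $\Omega^*(\xi,\bar\xi) = \mathrm{wp}(\chi,\bar\chi)$. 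I would also be slightly careful to record explicitly that $\mathrm{wp}$-isotropy of $\sum_s[b_s]\otimes\bC^2$ follows from $[b_s]\cup[b_s]=0$ together with the fact that $\mathrm{wp}_s$ is, by definition, the cup product tensored with the form on $\bZ^2$ — there is no contribution from distinct tori since $\mathrm{wp}$ is a direct sum. With these observations in place the proof is a two-line contradiction argument, and the lemma — hence Theorem \ref{theo:variety} — follows.
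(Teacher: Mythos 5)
Your proof is correct and is essentially the paper's own argument: conjugation-invariance and $\mathrm{wp}$-isotropy of $\sum_s [b_s]\otimes\bC^2$ give $\mathrm{wp}(\chi,\bar\chi)=0$, which contradicts the property of $\mathcal L(z)$ recorded in subsection \ref{lagrangian} (coming from Lemma \ref{lem:A-Omega} via the symplectic isomorphism of Proposition \ref{L4}) unless $\chi=0$. The only nitpick is that the vanishing forces $\chi=0$ by the \emph{first} bullet of Lemma \ref{lem:A-Omega} (transported to $\mathcal L(z)$), not the second, but you make the correct transfer explicit in your final paragraph, so the argument stands.
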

\begin{proof}
 Suppose that $\chi$ belongs to this intersection. Since $\sum_s
 [b_s]\otimes \bC^2$ is a Lagrangian subspace invariant under complex
 conjugation, the complex conjugate $\bar \chi $ also belongs to
 $\sum_s [b_s]\otimes \bC^2$ and we have  
$$\textrm{wp}(\chi,\bar\chi)= 0.$$
Since $\chi$ also belongs to $\mathcal L(z)$, Lemma \ref{lem:A-Omega}
finally implies that $\chi = 0$. 
\end{proof}

\subsection{Rigid points}\label{ss:tranverse} In general if $z \in
\mathcal{R} (M , \mathcal{T})$, the space $\mathcal L (z)$ is still a Lagrangian
subspace. Replacing Lemma \ref{lem:A-Omega} by the {\it assumption} that 
\begin{equation} \label{transverse}
(\bC \otimes (\mathrm{Im} (p \circ F))  \cap \mathcal{A} (z) = \{ 0 \},
\end{equation}
the proof of Theorem \ref{theo:variety} still implies that $\mathcal{R} (M ,
\mathcal{T})$ is (locally around $z$) a  smooth complex manifold of dimension
$2\ell$ and the choice of a $2\ell$-dimensional subspace of 
$\oplus_{s=1}^{\ell} H^1 (T_s,\bC^2)$ transverse to $\mathcal L (z)$ yields a
choice of local coordinates. A point $z$ verifying \eqref{transverse} is
called a \emph{rigid point} of $\mathcal{R} (M ,
\mathcal{T})$: indeed, at such a point, you cannot deform the representation
without deforming its trace on the boundary tori. Note that if there exists a
point $z \in \mathcal{R} (M , \mathcal{T})$ such that the condition
\eqref{transverse} is satisfied, then \eqref{transverse} is satisfied for
almost
every point in the same connected component: this transversality condition may
be expressed as the non-vanishing of a determinant of a matrix with entries
in $\bC(z)$. In the next section we provide
explicit examples of all the situations that can occur.

\subsection{Proof of Theorem \ref{T1}}

Theorem \ref{T1} does not immediately follow from Theorem
\ref{theo:variety} since $M$ may not admit an ideal
triangulation. Recall however that $M$ has a finite regular cover $M'$
that do admit an ideal triangulation (\cite{LST}). We may therefore 
apply Theorem \ref{T1} to $M'$ and the proof follows from the general  (certainly well known) lemma.

\begin{lemma}\label{lem:fincov}
Let $M'$ be a finite regular cover of $M$. Let $\rho$ and $\rho'$ be the
geometric representations for $M$ and $M'$.

Then one cannot deform $\rho$ without deforming $\rho'$.
\end{lemma}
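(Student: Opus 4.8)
The plan is to exploit the functoriality of the whole construction under finite covers together with the fact, already established in the excerpt, that deformations are detected by their effect on the peripheral holonomy.

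First I would set up the restriction map on representation spaces. Let $p : M' \to M$ be the regular cover with deck group $G$, and let $p^* : \mathrm{Hom}(\pi_1(M),\PGL(3,\bC)) \to \mathrm{Hom}(\pi_1(M'),\PGL(3,\bC))$ be the injection induced by the inclusion $\pi_1(M') \hookrightarrow \pi_1(M)$ of finite index. This descends to a map on the algebraic quotients $\mathrm{Hom}(\pi_1(M),\PGL(3,\bC))/\PGL(3,\bC) \to \mathrm{Hom}(\pi_1(M'),\PGL(3,\bC))/\PGL(3,\bC)$ sending $[\rho]$ to $[\rho'] = [\rho\circ p^*]$, since $\rho'$ is precisely the restriction of the geometric representation $\rho$ of $M$ (the geometric representation is compatible with covers). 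So the statement ``one cannot deform $\rho$ without deforming $\rho'$'' is the assertion that this restriction map is injective on a neighbourhood of $[\rho]$; equivalently, that $p^*$ does not kill any nontrivial first-order deformation of $\rho$, i.e. the map on (appropriate) tangent spaces / twisted cohomology $H^1(\pi_1(M), \Ad\rho) \to H^1(\pi_1(M'), \Ad\rho')$ is injective near $[\rho]$ — but I will phrase things concretely rather than cohomologically, matching the rest of the paper.

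The key step is to reduce injectivity to the peripheral holonomy. By Theorem \ref{theo:variety} applied to $M'$ (which does admit an ideal triangulation), the decorated representation variety $\mathcal{R}(M',\mathcal{T}^+)$ is a smooth manifold of dimension $2\ell'$, where $\ell'$ is the number of cusps of $M'$, and the map \eqref{hol} recording the diagonal parts $(A'_s, A_s'^*)$ of the peripheral holonomies is a local biholomorphism there. Hence a deformation $\rho'_t$ of $\rho'$ is constant (up to conjugacy, near $\rho'$) if and only if all the peripheral quantities $A'_s, A_s'^*$ stay constant to first order. Now suppose $\rho_t$ is a deformation of $\rho = \rho_0$ with $\rho'_t := \rho_t \circ p^*$ trivial (constant up to conjugacy). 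Each cusp of $M$ is covered by finitely many cusps of $M'$, each corresponding to a finite-index subgroup of the peripheral $\Z^2$; the diagonal part of the $\rho_t$-holonomy of a peripheral element $\gamma$ of $M$ and the diagonal part of the $\rho'_t$-holonomy of $\gamma^{k}$ (for the appropriate index $k$) differ only by the $k$-th power map. Since the latter is first-order constant and $t\mapsto z^k$ is a local diffeomorphism of $\bC^\times$, the peripheral diagonal data of $\rho_t$ on $M$ is also first-order constant. That is, the image of $\rho_t$ under the analogue of \eqref{hol} for $M$ is first-order constant.

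The final step is to conclude rigidity for $\rho$ itself. Here I invoke Theorem \ref{T1} for $M$ — or rather its proof via Theorem \ref{theo:variety}, noting that $\rho$ is the geometric representation so its decorated parameters lie in $\mathcal{R}(M,\mathcal{T}^+)$ if $M$ has a triangulation — wait: this is circular, since the whole point of Lemma \ref{lem:fincov} is to handle the case where $M$ has no triangulation. So instead I would argue directly: it is classical (Calabi--Weil, Garland, or Menal-Ferrer--Porti) that the geometric $\PGL(3,\bC)$-representation of the finite-volume cusped $M$ is infinitesimally rigid relative to the boundary, i.e. every first-order deformation of $\rho$ that is first-order trivial on $\partial M$ is itself first-order trivial — equivalently the restriction $H^1(M,\Ad\rho) \to H^1(\partial M, \Ad\rho)$ is injective. \emph{Hmm}, but again we want to avoid assuming the conclusion. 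The clean route: run the proof of Theorem \ref{theo:variety} backwards using the cover. We have shown $\rho_t$ has first-order-constant peripheral diagonal data; pull back to $M'$: then $\rho'_t$ has first-order-constant peripheral diagonal data, hence by the already-proven Theorem \ref{theo:variety} for $M'$, $\rho'_t$ is first-order trivial. But we \emph{assumed} $\rho'_t$ trivial — so this direction gives nothing new. The genuine content I need is: \textbf{if $\rho'_t$ is trivial then $\rho_t$ is trivial}, which is exactly: $H^1(M,\Ad\rho) \to H^1(M',\Ad\rho')$ is injective. This follows from a transfer/averaging argument: for a finite regular cover with group $G$, the composition $H^1(M,\Ad\rho) \xrightarrow{p^*} H^1(M',\Ad\rho')^G \xrightarrow{\mathrm{tr}} H^1(M,\Ad\rho)$ equals multiplication by $|G|$, hence $p^*$ is injective (coefficients are a $\bC$-vector space, so $|G|$ is invertible). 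Concretely, a deformation $\rho_t$ of $\rho$ is encoded by a cocycle $u\in Z^1(\pi_1 M,\Ad\rho)$; if $\rho'_t$ is trivial then $u|_{\pi_1 M'}$ is a coboundary $\partial v'$; averaging $v'$ over the $G$-action produces $v$ with $\partial v = u$ after the same averaging on $u$, but $u$ is $G$-fixed (it lives on $\pi_1 M$), so one recovers that $u$ is itself a coboundary up to the factor $|G|$, forcing $\rho_t$ trivial to first order. Since first-order triviality at a smooth point of the character variety of $M'$ — and the character variety of $M$ is at worst no larger — controls actual local triviality (or one just argues formally with the universal deformation), we conclude one cannot deform $\rho$ without deforming $\rho'$.

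I expect the main obstacle to be the bookkeeping in the averaging/transfer step — making precise that ``first-order trivial'' genuinely upgrades to ``cannot deform'' without smoothness of the $M$-character variety in hand (which we don't yet have), so the cleanest formulation really is the cohomological injectivity of $p^*$ combined with the fact that $[\rho']$ is a smooth point of its character variety by Theorem \ref{theo:variety}; the geometry of cusps-covering-cusps and the $k$-th power maps is routine but must be stated carefully.
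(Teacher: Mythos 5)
The idea you finally settle on --- restriction--corestriction (transfer) over the finite regular cover, giving injectivity of $H^1(\pi_1(M),\Ad\rho)\to H^1(\pi_1(M'),\Ad\rho')$ --- is sound, and it is a genuinely different route from the paper's. But the gap you yourself flag at the end is real and is not closed by your proposed patch. Transfer only tells you that the \emph{velocity class} at $t=0$ of a deformation of $\rho$ whose restriction is trivial must vanish in $H^1$; the lemma is about actual deformations, and ``first-order trivial'' does not by itself give local constancy. Invoking smoothness of the character variety of $M'$ at $[\rho']$ (Theorem \ref{theo:variety}) does not help: what must be controlled is the \emph{fiber} of the restriction map over $[\rho']$ near $[\rho]$, and smoothness of the target says nothing about that fiber, while smoothness of the source at $[\rho]$ is exactly what is not yet available (it is the content of Theorem \ref{T1}). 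A correct way to finish along your lines is to run your averaging argument one level down, on cocycles rather than on classes: if $u\in Z^1(\pi_1(M),\Ad\rho)$ vanishes identically on $\pi_1(M')$, transfer gives $u=\delta v$ with $v$ invariant under $\Ad\rho'(\pi_1(M'))$, and Zariski density of the geometric $\rho'$ forces $v=0$, hence $u=0$; thus the Zariski tangent space at $\rho$ of the scheme-theoretic fiber of $\mathrm{Hom}(\pi_1(M),\PGL(3,\bC))\to\mathrm{Hom}(\pi_1(M'),\PGL(3,\bC))$ over $\rho'$ is zero, so $\rho$ is isolated in that fiber, and the usual reduction from ``restriction constant up to conjugacy'' to ``restriction constant'' (again using irreducibility of $\rho'$) yields the lemma. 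None of this appears in your write-up, so as it stands the argument is incomplete precisely at its last step. (The long first half about peripheral holonomies is, as you note yourself, circular and plays no role.)

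For comparison, the paper's proof is far more elementary and needs neither cohomology nor any infinitesimal-to-local upgrade: choose loxodromic generators $\gamma_i$ of $\pi_1(M)$ and let $n$ be the index of $\pi_1(M')$; then $\rho(\gamma_i)$ must be an $n$-th root of the loxodromic element $\rho'(\gamma_i^n)$, and a regular semisimple element of $\PGL(3,\bC)$ has only finitely many $n$-th roots, so the representations restricting to a fixed $\rho'$ form a finite set and continuity of a deformation forces it to be constant. That finiteness statement about the fiber is stronger than the tangent-space vanishing your method provides, and it is what lets the paper conclude in two lines.
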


\begin{proof}
 Let $\gamma_i$ be a finite set of loxodromic element generating $\pi_1(M)$.
Let $n$ be the index of $\pi_1(M')$ in $\pi_1(M)$. Then $\gamma_i^n$ is a
loxodromic element of $\pi_1(M')$.

Hence $\rho'(\gamma_i^n)=(\rho(\gamma_i))^n$ is a loxodromic elements in
$\PGL(3,\bC)$. The crucial though elementary remark is that its $n$-th square
roots form a finite set of $\PGL(3,\bC)$. So, once
 $\rho'$ is fixed, the determination of a representation $\rho$ such that
$\rho'=\rho_{|\pi_1(M')}$ requires a finite number of choices:
we should choose a $n$-th square root for each $\rho'(\gamma_i^n)$ among a
finite number of them. 
\end{proof}

\section{Examples}

In this section we describe exact solutions of the compatibility equations which
give all unipotent decorations of the triangulation with two tetrahedra of the
figure eight knot's sister manifold. This manifold has one cusp, so is
homotopic to a compact manifold whose boundary consits of one torus. In term of
theorem \ref{theo:variety}, we are looking to the fiber over $(1,1)$ of the map
$z\mapsto (A,A^*)$. We show that beside rigid decorations (i.e. isolated
points in the fiber) we obtain non-rigid ones. Namely four $1$-parameter families
of unipotent decorations.

Among the rigid decorations, one corresponds to the
(complete) hyperbolic structure and belongs to $\mathcal R(M,\mathcal T^+)$. The
rigidity then follows from theorem \ref{theo:variety}. At the other isolated
points, the rigidity is merely explained by the transversality between
$\mathcal A(z)$ and $\mathrm{Im}(p\circ F)$, as explained in subsection
\ref{ss:tranverse}.

As for the non-rigid components, their existence shows firstly that rigidity is
not granted at all. Moreover, the geometry of the
fiber over a point in $(\mathbb C^*)^2$ appears to be possibly complicated, with
intersections of components. The map from the
(decorated) representation variety $\mathcal R(M,\mathcal T)$ to its image in the representation
variety of
the torus turns out to be far from trivial from a geometric point of view.

Let us stress out that these components contain also points of special interest:
there are points corresponding to representations with value in
$\mathrm{\PSL}(2,\bC)$ which are rigid \emph{inside} $\mathrm{\PSL}(2,\bC)$, but not
anymore inside $\mathrm{\PSL}(3,\bC)$.

The analysis of this simple example seems to indicate that basically anything
can happen, at least outside of $\mathcal R(M,\mathcal T^+)$.

\subsection{The figure-eight knot's sister manifold}
This manifold $M$ and its triangulation $\mathcal T$ is described by the gluing
of two tetrahedra as in Figure \ref{sister}. Let $z_{ij}$ and $w_{ij}$ be the
coordinates associated to the edge $ij$. We will express all the equations in
terms of these edge coordinates (as the face coordinates are monomial in edges
coordinates, see \eqref{Rel1}).

\begin{figure}[ht]
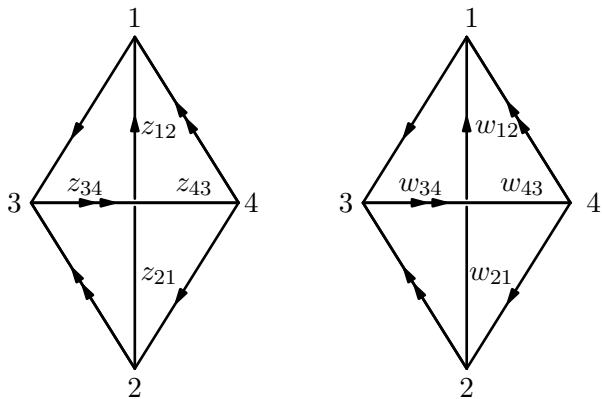
 
\begin{center}
\begin{asy}
  // 
  // 
  size(8cm);
  defaultpen(1);
  usepackage("amssymb");
  import geometry;
  
  // 
  point o = (8,0);
  point oo = (24,0);
  pair ph = (10,0);
  pair pv = (5,8);

  // 
  draw(o+pv--o, Arrow(5bp,position=.6));
  draw( o+ph--o+pv, Arrow(5bp,position=.5),Arrow(5bp,position=.6));
  draw(o+ph-pv-- o, Arrow(5bp,position=.5),Arrow(5bp,position=.6));
  draw(o+ph--o+ph-pv, Arrow(5bp,position=.6));
  draw(o+ ph/2+(0,.2)--o+pv, Arrow(5bp,position=.5));
  draw(o+ph-pv -- o+ph/2+ (0,-.2));
  draw(o -- o+ph,Arrow(5bp,position=.3), Arrow(5bp,position=.4) );
  
  // 
  label("\small { $z_{34}$}", o, 3*dir(25));
  //
  //
  label("{\small  $3$}", o, 1*W);
  
  label("\small {$z_{43}$}", o+ph, 3*dir(180-25));
  //
  //
  label("{\small $4$}", o+ph, .6*E);
  
  //
  label(" \small $ z_{12}$", o+pv, 9*dir(-80));
  //
  label("{\small $1$}", o+pv, 1*N);
  
   //
  label(" \small $ z_{21}$", o+ph-pv, 9*dir(80));
  //
  label("{\small $2$}", o+ph -pv, 1*S);

  // 
  draw( oo+pv -- oo,Arrow(5bp,position=.6));
  draw( oo+ph -- oo+pv ,Arrow(5bp,position=.5),Arrow(5bp,position=.6));
  draw( oo+ph-pv -- oo,Arrow(5bp,position=.5),Arrow(5bp,position=.6));
  draw(oo+ ph/2+(0,.2)--oo +pv, Arrow(5bp,position=.5));
  draw(oo+ph-pv -- oo+ ph/2-(0,.2));
  draw(oo+ ph--oo+ph-pv ,Arrow(5bp,position=.6));
  draw(oo -- oo+ph,Arrow(5bp,position=.3), Arrow(5bp,position=.4));
  
   // 
  label("\small { $w_{34}$}", oo, 3*dir(25));
  //
  //
  label("{ \small $3$}", oo, 1*W);
  
  label("\small {$w_{43}$}", oo+ph, 3*dir(180-25));
  //
  //
  label("{ \small $4$}", oo+ph, .6*E);
  
  //
  label(" \small $  w_{12}$", oo+pv, 9*dir(-77));
  //
  label("{ \small $1$}", oo+pv, 1*N);
  
   //
  label(" \small $ w_{21}$", oo+ph-pv, 9*dir(80));
  //
  label("{\small $2$}", oo+ph -pv, 1*S);

\end{asy}
\caption{The figure eight sister manifold represented by two tetrahedra.} \label{sister}
\end{center}
\end{figure}

The variety $\mathcal R(M,\mathcal T)$ is then given by relations \eqref{Rel2}
and \eqref{Rel3} among the $z_{ij}$ and among the $w_{ij}$ plus the face and
edge conditions \eqref{face} and \eqref{edge}.

In this case, the edge equations are:
\begin{equation}
(L_e)\left \{
\begin{array}{rcl}
e_1:=z_{23}z_{34}z_{41}w_{23}w_{34}w_{41}-1&=&0,\\
e_2:=z_{32}z_{43}z_{14}w_{32}w_{43}w_{14}-1&=&0,\\
e_3:=z_{12}z_{24}z_{31}w_{12}w_{24}w_{31}-1&=&0,\\
e_4:=z_{21}z_{42}z_{13}w_{21}w_{42}w_{13}-1&=&0.
\end{array}
\right .
\end{equation}

and the face equations are:

\begin{equation}
(L_f)\left \{
\begin{array}{rcl}
f_1:=z_{21}z_{31}z_{41}w_{12}w_{32}w_{42}-1&=&0,\\
f_2:=z_{12}z_{32}z_{42}w_{21}w_{31}w_{41}-1&=&0,\\
f_3:=z_{13}z_{43}z_{23}w_{14}w_{34}w_{24}-1&=&0,\\
f_4:=z_{14}z_{24}z_{34}w_{13}w_{23}w_{43}-1&=&0.
\end{array}
\right .
\end{equation}

Moreover, one may compute the eigenvalues of the holonomy in the boundary
torus (see, \cite[section 7.3]{BFG}) by following the two paths representing the generators of 
the boundary torus homology in Figure \ref{holonomy}. The two eigenvalues associated to a path
are obtained using the following rule: For the first one say $A$, we multiply the cross-ratio invariant $z_{ij}$ if the vertex $ij$ of a triangle is seen to the left
and by its inverse if its seen to the right.  For the inverse of the second one, say $A^*$, we multiply by  $1/z_{ji}$ if the vertex $ij$ of a triangle is seen to the left
and by $z_{kl}z_{lk}/z_{ij}$ if it is seen to the right.

$$
A= z_{12}\frac{1}{w_{32}}
z_{41}\frac{1}{w_{21}},A^*=
\frac{1}{z_{21}}\frac{w_{14}w_{41}}{w_{32}}\frac{1}{z_{14}}\frac{w_{34}w_{43}}{w_{21}},
$$
$$
B=
z_{31}\frac{1}{w_{14}}z_{42}\frac{1}{w_{23}},B^*=
\frac{1}{z_{13}}\frac{w_{23}w_{32}}{w_{14}}\frac{1}{z_{24}}\frac{w_{14}w_{41}}{w_{23}}
$$
or, equivalently~:

\begin{equation}
(L_{h,A,A^{*},B,B^{*}})\left \{
\begin{array}{lclcr}
h_{A}&:=&w_{32}w_{21}A- z_{12} z_{41}&=&0,\\ 
h_{A^{*}}&:=&z_{21}w_{32}z_{14}w_{21}A^{*}-w_{14}w_{41}w_{34}w_{43}&=&0,\\
h_{B}&:=&w_{14}w_{23}B-z_{31}z_{42}&=&0,\\
h_{B^{*}}&:=&z_{13}w_{14}z_{24}w_{23}B^{*}-w_{23}w_{32}w_{14}w_{41}&=&0.
\end{array}
\right .
\end{equation}

\begin{figure}
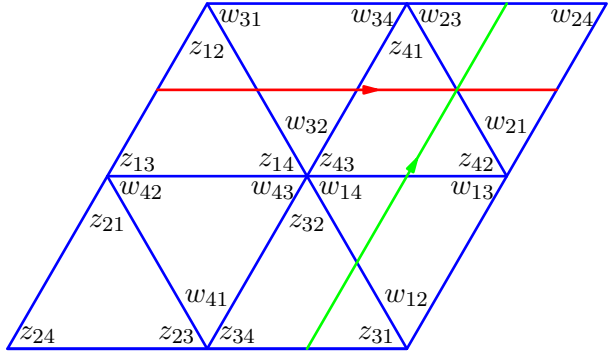
\label{holonomy}
\begin{center}
\begin{asy}
  // 
  // 
  size(8cm);
  defaultpen(1);
  usepackage("amssymb");
  import geometry;
  
  // 
  point o = (1,0);
  pair ph = (2,0);
  pair pv = (1,1.73);
  
  // 
  draw(o -- o+2*pv -- o+2*pv+2*ph--o+2*ph--cycle,blue);
  draw( o+2*pv+ph -- o+pv+2*ph,blue);
  draw( o+pv -- o+pv+ph --cycle,blue);
  draw( o+pv -- o+ph --cycle,blue);
  draw( o+2*pv -- o+2*ph --cycle,blue);
  draw( o+pv+ph --o+pv+2*ph--cycle,blue);
  draw(o -- o+2*ph, blue);
  draw(o +ph-- o+2*pv+ph, blue);
  
  // 
  label("{\small $z_{24}$}", o, 1.5*dir(35));
  label("\small $ z_{34}$", o+ph, 1.5*dir(35));
  label("\small $z_{43}$", o+ph+pv, 1.5*dir(35));
  label("\small $z_{13}$", o+pv, 1.5*dir(35));
  
  label("\small $ z_{21}$", o+pv, 4*S);
  label("\small $ z_{32}$", o+pv+ph, 4*S);
  label("\small $z_{41}$", o+2*pv+ph, 4*S);
  label("\small $ z_{12}$", o+2*pv, 4*S);
  
  label("\small $z_{23}$", o+ph, 1.5*dir(145));
  label("\small $z_{31}$", o+2*ph, 1.5*dir(145));
  label("\small $z_{42}$", o+pv+2*ph, 1.5*dir(145));
  label("\small $ z_{14}$", o+ph+pv, 1.5*dir(145));

  label("\small$w_{42}$", o+pv, 1.5*dir(-35));
  label("\small $w_{14}$", o+pv+ph, 1.5*dir(-35));
  label("\small $ w_{31}$", o+2*pv, 1.5*dir(-35));
  label("\small $w_{23}$", o+2*pv+ph, 1.5*dir(-35));
  
  label("\small$ w_{41}$", o+ph, 4.5*N);
  label("\small $ w_{12}$", o+ph+ph, 4.5*N);
  label("\small $w_{32}$", o+pv+ph, 4.5*N);
  label("\small $ w_{21}$", o+2*ph+pv, 4.5*N);

  label("\small $ w_{43}$", o+pv+ph, 1.5*dir(-145));
  label("\small $ w_{13}$", o+pv+2*ph, 1.5*dir(-145));
  label("\small $w_{34}$", o+2*pv+ph, 1.5*dir(-145));
  label("\small $ w_{24}$", o+2*pv+2*ph, 1.5*dir(-145));

 // 
 // 
 draw ((o+1.5*pv)..((o+1.5*pv)+2*ph),red,Arrow(5bp,position=.55));
 
// 
draw ((o+1.5*ph)..((o+1.5*ph)+2*pv),green,Arrow(5bp,position=.55));

\end{asy}
\caption{The boundary holonomy of the figure eight sister manifold. The red line corresponds to $A, A^*$ and the green line to $B, B^*$} 
\end{center}
\end{figure}

If $A=B=A^*=B^*=1$ the solutions of the equations correspond to unipotent 
structures.  
\def\Frac#1#2{{\displaystyle{{#1} \overwithdelims.. {#2}}}}
\subsection{Methods}
The computational problem to be solved is the description of a
constructible set of ${\bC}^{24}$ defined by the union of the edge
equations ($L_e$), the face equations ($L_f$), the equations
modelizing unipotent structures ($L_{h,1,1,1,1}$) augmented by a set
of relations between some of the variables ($L_r$) and a set of
inequalities (the coordinates are supposed to be
different from $0$ and $1$), with~:

\begin{equation}
L_r:=\left \{
\begin{array}{cccc}
w_{13}= \Frac 1{1-w_{12}},&
w_{14}={\Frac {w_{12}-1}{w_{12}}},&
w_{23}={\Frac {w_{21}-1}{w_{21}}},&
w_{24}= \Frac 1{1-w_{21}},\\
w_{31}= \Frac 1{1-w_{34}},&
w_{32}={\Frac {w_{34}-1}{w_{34}}},&
w_{41}={\Frac {w_{43}-1}{w_{43}}},&
w_{42}= \Frac 1{1-w_{43}},\\
z_{13}= \Frac 1{1-z_{12}},&
z_{14}={\Frac {z_{12}-1}{z_{12}}},&
z_{23}={\Frac {z_{21}-1}{z_{21}}},&
z_{24}= \Frac 1{1-z_{21}},\\
z_{31}= \Frac 1{1-z_{34}},&
z_{32}={\Frac {z_{34}-1}{z_{34}}},&
z_{41}={\Frac {z_{43}-1}{z_{43}}},&
z_{42}= \Frac 1{1-z_{43}}.
\end{array}
\right .
\end{equation}

After a straightforward substitution of the relations $L_r$ in the
equations
$$\{e_1,\ldots,e_4,f_1,\ldots,f_4,{h_{A}}_{|A=1},{h_{A^{*}}}_{|A^*=1},{h_{B}}_{
|B=1 } , { h_
{ B^
{*}}}_{|B^*=1}\},$$
one shows that the initial problem is then equivalent to describing the
constructible set defined by a set of $12$ polynomial equations 
$${\cal E}:=\left \{x\in {\bC}^8,P_i(x)=0, \ i=1, \ldots ,12, \ P_i\in\Z[\cal{X}] \right \},$$
in $8$ unknowns 
$${\cal X}=\{z_{12},\,
z_{21},\,z_{34},\,z_{43},\,w_{12},\,w_{21},\,w_{34},\,w_{43}\},$$
and a set  of $16$
polynomial inequalities ${\cal F}:= \left \{x\in {\bC}^{8},u(x)\neq 0,u(x)\neq 1,
u\in {\cal
    X}\right \}$.
Classical tools from computer algebra are used to:
\begin{itemize}
  \item Compute generators of ideals using Gr\"obner bases.  A Gr\"obner basis of a polynomial ideal $I$ is a set of generators of $I$, such that there is a
natural way of reducing canonically a polynomial $P$ ($\rm mod$ $I$). 
  \item Eliminate variables: Given $\mathcal{Y \subset \mathcal{X}}$ and
   $I\subset \mathbbm{Q} \left [ \mathcal{X} \right ]$, compute $J = I \cap
  \mathbbm{Q} \left[ \mathcal{Y} \right]$ and note that the set $\mathcal{J} =
  \left\{ x \in \mathbbm{C}^{\sharp \mathcal{Y}}, p \left( x \right) = 0, p
  \in J \right\}$ is the Zariski closure of the projection of $\mathcal{I} =
  \left\{ x \in \mathbbm{C}^{\sharp \mathcal{X}}, p \left( x \right) = 0, p
  \in I \right\}$ onto the $\mathcal{Y}$-coordinates.
\end{itemize}
Combining the items, one can then compute an ideal $I'$ whose zero set is
$\overline{\mathcal{E} \setminus \mathcal{F}}$ by computing $\left( I +
\langle T \prod_{f \in \mathcal{F}} f - 1 \rangle \right) \cap \mathbbm{Q}
\left[ \mathcal{X} \right]$ (see for example {\cite[chapter 4]{CLOS}}).

For rather small systems, one then compute straightforwardly (by means of a
classical algorithm) a prime or primary decomposition of any ideal defining
$\overline{\mathcal{E} \setminus \mathcal{F}}$. This is possible in the
present case. In practice however, for triangulations with more than two
tetrahedra, these classical algorithms will not be sufficiently powerful to
study these varieties.

We do not go further in the description of the computations which will be part
of a more general contribution by the last three authors. Let us just mention
that the process gives us an exhaustive description of all the components of
the constructible set we study. Moreover, the interested reader may easily
check that the given solutions verify indeed all the equations.

For the present paper, we just retain that a prime decomposition of an ideal
defining $\overline{\mathcal{E} \setminus \mathcal{F}}$ has been computed and
we give the main elements describing the solutions so that the reader can at
least check the main properties (essentially dimensions) of the results.

Each component ($0$ or $1$ dimensional) can be described in the same way: a
polynomial $P$ (in one or $2$ variables) over $\mathbbm{Q}$ such that each
coordinate $z_{ij}$ or $w_{ij}$ is an algebraic (over $\mathbbm{Q}$) function
of the roots of $P$. In particular, they naturally come in families of Galois
conjugates. This is no surprise, as the equations defining $\mathcal{R} (M,
\mathcal{T})$ have integer coefficients.




\subsection{Rigid unipotent decorations}

We are looking for the isolated points of the set $\mathcal U=\{z\in\mathcal
R(M,\mathcal T) \; | \; A=A^*=B=B^*=1\}$.

There are 4 Galois families of such points.
They are described by four irreducible polynomials with integer coefficients in
one variable.  Two of them are of degree 2 and  the other two of degree 8.

The first polynomial is the minimal polynomial of
the sixth root of unity $\frac{1+i\sqrt{3}}{2}$. For a root
$\omega^\pm=\frac{1 \pm i\sqrt{3}}{2}$, the following defines an isolated point
in $\mathcal U$:
$$
z_{12}=z_{21}=z_{34}=z_{43}=w_{12}=w_{21}=w_{34}=w_{43}=\omega^\pm
$$ 
The solution associated to $\omega^+$ is easily checked to correspond to the
hyperbolic structure on $M$: it is the geometric representation as we called
it. The other one is its complex conjugate.

A point of $\mathcal R(M,\mathcal T)$ corresponding to a representation in $\mathrm{PU}(2,1)$ (we call such representations CR, see \cite{falbeleight})
 with unipotent boundary holonomy was obtained in 
\cite{genzmer} and is parametrized by the same polynomial, the $z$ and $w$
coordinates being this time given by:
$$
z_{12}= z_{21}=-\omega \ \ \ z_{34}=z_{43}=-(\omega^\pm)^2,
$$
$$
w_{12}= w_{21}=-\omega^2 \ \ \ w_{34}=w_{43}=-\omega^\pm.
$$

The two other isolated 0-dimensional components have degree 8 and
their minimal polynomial are respectively:
$$
P={X}^{8}-{X}^{7}+5\,{X}^{6}-7\,{X}^{5}+
7\,{X}^{4}-8\,{X}^{3}+5\,{X}^{2}-2\,X+
1 = 0
$$
and
$$
Q(X) = P(1-X)=0. 
$$
We do not describe all the $z$ and $w$ coordinates in terms of their roots (for
the record, let us mention that $z_{43}$ is directly given by the root). None of
these 16 representations are in $\mathrm {PSL}(2,\mathbb
C)$ nor in $\mathrm{PU}(2,1)$. 

Although the computations above are exact we could also check that these isolated components are rigid by computing that the tangent space is zero dimensional.
We do not include the computations here. 

\subsection{Non-rigid components}

There exist two 1-dimensional prime components  ($S_1$ and $S_2$) each of them can be parametrized 
by two 1-parameter families.

The four 1-parameter families of solutions are described as follows:
let $\tau^{\pm} = \frac 12 \pm \frac 12 \sqrt 5$ be one of the two real
roots of $X^2 = X+1$. 
Then the roots $X^2 - XY -Y^2$ define two 1-parameter families 
meeting at $(0,0)$: $X=\tau^{\pm} Y$.
They parametrize four 1-parameter families of points
$(S_1^{\pm})$ and $( S_2^{\pm})$.

For $S_1$ we obtain: 
$$
(S_1^{\pm}) \quad \left \{
\begin{array}{ll}
z_{12} = w_{12} = 
\Frac {X+Y}{X-1}, &
z_{21}=w_{21} = 1+Y\\
z_{34}=w_{34} = 
\Frac {X^2+X+Y}{X(X-1)}, & 
z_{43}=w_{43}=X.
\end{array}
\right .
$$
 By restricting  $S_1$ to the conditions so that the representation be in $\PU(2,1)$ we obtain (after writing the system as a real system separating real and imaginary parts) an algebraic set of real dimension 1 entirely characterized by its projection 
on the coordinates in $\bR^2$ of $z_{21}=x+i y$.  The projection is a product of two circles:
$$
(x-\tau^\pm)^2 + y^2 = 1. 
$$

Among the solutions (in $S_1$) we obtain only two  belonging to $\mathrm{PSL}(2,\bC)$ (and
they even belong to
$\mathrm{PSL}(2,\R)\subset \PU(2,1)$):
\begin{eqnarray*}
z_{12}=z_{21}=z_{34}=z_{43}=w_{12}=w_{21}=w_{34}=w_{43}=1+\tau^\pm.\\
\end{eqnarray*}
These points are then rigid inside $\mathrm{PSL}(2,\bC)$ but not inside
$\mathrm{PSL}(3,\bC)$ (neither inside $\PU(2,1)$).

The other two 1-parameter families are parametrized as follows
$$
(S_2^{\pm}) \quad 
\left \{
\begin{array}{ll}
z_{12} = w_{21} = 
1+\Frac {Y}{X} - \Frac {(X+1)(Y+1)}{X^2+X-1} + ,&
z_{21} = w_{12} = 
\Frac {X+Y-1}{Y-1},\\
z_{34}=w_{43} = X+Y, &
z_{43} = w_{34} = 1/Y
\end{array}
\right .
$$
None of these points gives a representation in  $\PSL(2,\bC)$ nor in $\PU(2,1)$.

\bibliography{bibli}

\bibliographystyle{plain}

\end{document}